\documentclass[a4paper,12pt]{article} 
\newcommand{\rmistyle}[1]{}
\newcommand{\ifrmirejected}[1]{#1}
\rmistyle{
\usepackage[journal=RMI,lang=american]{ems-journal-rmi}
\title{Geometry on principal curvature surfaces}
\titlemark{Geometry on principal curvature surfaces}
\emsauthor{1}{
	\givenname{Luciana}
	\surname{de F\'{a}tima Martins}
	\zblid{martins.luciana-f}
	\mrid{842358}
	\orcid{0000-0002-1265-6168}}{L.~Martins}
\emsauthor{2}{
	\givenname{Kentaro}
	\surname{Saji}
	\mrid{701192}
	\zblid{saji.kentaro}
	\orcid{0000-0002-1863-9713}}{K.~Saji}
\emsauthor{3}{
	\givenname{Samuel}
	\surname{Paulino dos Santos}
	\mrid{1344794}
	\zblid{dos-santos.samuel-paulino}
	\orcid{}}{S.~Santos}
\emsauthor{4}{
	\givenname{Runa}
	\surname{Shimada}
	\mrid{1688069}
	\zblid{runa.shimada}
	\orcid{0009-0006-7989-8846}}{R.~Shimada}
\Emsaffil{1}{
	\pretext{}
	\department{Departamento de Matem{\'a}tica}
	\organisation{IBILCE}
	\rorid{UNESP}
	\address{R. Crist{\'o}v{\~a}o Colombo}
	\zip{2265}
	\city{S{\~a}o Jos{\'e} do Rio Preto, SP}
	\country{Brazil}
	\posttext{CEP 15054-000}
	\affemail{luciana.martins@unesp.br}
	\furtheremail{}}
\Emsaffil{2}{
	\pretext{}
	\department{Department of Mathematics}
	\organisation{Graduate School of Science}
	\rorid{Kobe University}
	\address{Rokkodai}
	\zip{1-1}
	\city{Nada, Kobe}
	\country{Japan}
	\posttext{657-8501}
	\affemail{saji@math.kobe-u.ac.jp}
	\furtheremail{}}
\Emsaffil{3}{
	\pretext{}
	\department{Departamento de Matem{\'a}tica}
	\organisation{IBILCE}
	\rorid{UNESP}
	\address{R. Crist{\'o}v{\~a}o Colombo}
	\zip{2265}
	\city{S{\~a}o Jos{\'e} do Rio Preto, SP}
	\country{Brazil}
	\posttext{CEP 15054-000}
	\affemail{spsantos@uem.br}
	\furtheremail{}}
\Emsaffil{4}{
	\pretext{}
	\department{Department of Mathematics}
	\organisation{Graduate School of Science}
	\rorid{Kobe University}
	\address{Rokkodai}
	\zip{1-1}
	\city{Nada, Kobe}
	\country{Japan}
	\posttext{657-8501}
	\affemail{231s010s@stu.kobe-u.ac.jp}
	\furtheremail{}}
\classification[53A05]{57R45}
\keywords{curvature parabolla, focal set, principal curvature, Whitney umbrella}

\usepackage{ascmac}
\usepackage{mathrsfs}
\usepackage[active]{srcltx}
\usepackage{amscd}
\usepackage{cases}
\usepackage[noadjust]{cite}
 \newtheorem{theorem}{Theorem}[section]
 \newtheorem{proposition}[theorem]{Proposition}
 
 \newtheorem{lemma}[theorem]{Lemma}
 \newtheorem{corollary}[theorem]{Corollary}
\theoremstyle{definition}

\numberwithin{equation}{section}
\numberwithin{figure}{section}
\renewcommand{\theenumi}{{\rm(\arabic{enumi})}}
\renewcommand{\labelenumi}{\theenumi}
\newcounter{introthm}
\theoremstyle{plain}
\newtheorem{introtheoremT}{Theorem}
\newenvironment{introtheorem}
  {\refstepcounter{introthm}%
   \renewcommand{\theintrotheoremT}{\Alph{introthm}}%
   \begin{introtheoremT}}
  {\end{introtheoremT}}
\newtheorem{introcorollaryT}{Corollary}

\newcommand{\red}[1]{\textcolor{red}{#1}}
\newcommand{\pur}[1]{\textcolor{purple}{#1}}
\newcommand{\blue}[1]{\textcolor{blue}{#1}}
\newcommand{\green}[1]{\textcolor{green}{#1}}
\newcommand{\R}{\boldsymbol{R}}
\newcommand{\C}{\boldsymbol{C}}
\newcommand{\Z}{\boldsymbol{Z}}
\newcommand{\N}{\boldsymbol{N}}
\newcommand{\A}{\mathcal{A}}
\newcommand{\ZZ}{\mathcal{Z}}
\newcommand{\pmt}[1]{{\begin{pmatrix} #1  \end{pmatrix}}}
\newcommand{\vect}[1]{\boldsymbol{#1}}
\newcommand{\bv}{\boldsymbol{V}}
\newcommand{\bw}{\boldsymbol{w}}
\newcommand{\inner}[2]{{#1}\cdot{#2}}
\newcommand{\spann}[1]{\left\langle{#1}\right\rangle}
\newcommand{\herm}{\operatorname{Herm}}
\newcommand{\hess}{\operatorname{Hess}}
\newcommand{\sgn}{\operatorname{sgn}}
\newcommand{\trace}{\operatorname{trace}}
\newcommand{\tr}{\operatorname{trace}}
\newcommand{\rank}{\operatorname{rank}}
\newcommand{\im}{\operatorname{Im}}
\newcommand{\first}{{\rm I}}
\newcommand{\second}{{\rm I\!I}}
\newcommand{\firstmat}{I}
\newcommand{\secondmat}{I\!I}
\renewcommand{\phi}{\varphi}
\renewcommand{\hom}{\operatorname{Hom}}
\renewcommand{\Gamma}{\varGamma}
\newcommand{\ep}{\varepsilon}
\newcommand{\RR}{{\mathcal R}}
\newcommand{\M}{{\mathscr M}}
\newcommand{\E}{{\mathscr E}}
\newcommand{\D}{{\mathscr D}}
\newcommand{\K}{{\mathcal K}}
\newcommand{\coef}{\operatorname{coef}}
\pagestyle{plain}
\renewcommand{\thefootnote}{\fnsymbol{footnote}}
\newcommand{\bgamma}{\vect{\gamma}}
\newcommand{\bxi}{\vect{\xi}}
\newcommand{\bnu}{\vect{\nu}}
\newcommand{\n}{\vect{n}}
\newcommand{\bn}{\vect{n}}
\newcommand{\be}{\vect{e}}
\newcommand{\e}{\vect{e}}
\newcommand{\bb}{\vect{b}}
\newcommand{\Gr}{\operatorname{Gr}}
\newcommand{\trans}[1]{{\vphantom{#1}}^t{\!#1}}
\begin{document}
\date{\today}
\begin{abstract}
	We study the geometry of the principal curvature surface associated with a Whitney umbrella. This surface is obtained by extending the Whitney umbrella in the normal direction by its bounded principal curvature and admits a natural geometric interpretation. We prove that its intersection with the normal plane coincides with the pedal curve of the curvature parabola and deduce that the focal conic is the inversion of this pedal curve.
	We then investigate the geometry of the principal curvature surface along the exceptional set by  studying its geodesic and normal curvatures, as well as its Gaussian and mean curvatures, and determining all possible generic numbers of zeros of these curvature functions.
\end{abstract}
\maketitle
\begin{ack}
Dedicated to the memory of Maria Aparecida Soares Ruas.
\end{ack}

\begin{funding}
The second author was partially supported by 
JSPS KAKENHI 25K07001, 22KK0034.
\end{funding}
} 
\ifrmirejected{
\setlength{\oddsidemargin}{2mm}
\setlength{\evensidemargin}{2mm}
\setlength{\topmargin}{-23mm}
\setlength{\textwidth}{156mm}
\setlength{\textheight}{258mm}
\renewcommand{\baselinestretch}{1}
\usepackage{times}
\usepackage{amsmath,amssymb}
\usepackage{amsthm}
\usepackage{ascmac}
\usepackage[dvipdfmx]{graphicx}
\usepackage{mathrsfs}
\usepackage[active]{srcltx}
\usepackage{amscd}
\usepackage{cases}
\usepackage[noadjust]{cite}
 \newtheorem{theorem}{Theorem}[section]
 \newtheorem{proposition}[theorem]{Proposition}
 \newtheorem{fact}[theorem]{Fact}
 \newtheorem{lemma}[theorem]{Lemma}
 \newtheorem{corollary}[theorem]{Corollary}
\theoremstyle{definition}
 \newtheorem{definition}[theorem]{Definition}
 \newtheorem{remark}[theorem]{Remark}
 \newtheorem{example}[theorem]{Example}
\numberwithin{equation}{section}
\numberwithin{figure}{section}
\renewcommand{\theenumi}{{\rm(\arabic{enumi})}}
\renewcommand{\labelenumi}{\theenumi}
\newcounter{introthm}
\theoremstyle{plain}
\newtheorem{introtheoremT}{Theorem}
\newenvironment{introtheorem}
  {\refstepcounter{introthm}%
   \renewcommand{\theintrotheoremT}{\Alph{introthm}}%
   \begin{introtheoremT}}
  {\end{introtheoremT}}
\newtheorem{introcorollaryT}{Corollary}
\newenvironment{introcorollary}
  {\refstepcounter{introthm}%
   \renewcommand{\theintrocorollaryT}{\Alph{introthm}}%
   \begin{introcorollaryT}}
  {\end{introcorollaryT}}
\usepackage[usenames]{color}
\newcommand{\red}[1]{\textcolor{Red}{#1}}
\newcommand{\pur}[1]{\textcolor{Purple}{#1}}
\newcommand{\blue}[1]{\textcolor{Blue}{#1}}
\newcommand{\green}[1]{\textcolor{Green}{#1}}
\newcommand{\R}{\boldsymbol{R}}
\newcommand{\C}{\boldsymbol{C}}
\newcommand{\Z}{\boldsymbol{Z}}
\newcommand{\N}{\boldsymbol{N}}
\newcommand{\A}{\mathcal{A}}
\newcommand{\ZZ}{\mathcal{Z}}
\newcommand{\pmt}[1]{{\begin{pmatrix} #1  \end{pmatrix}}}
\newcommand{\vect}[1]{\boldsymbol{#1}}
\newcommand{\bv}{\boldsymbol{V}}
\newcommand{\bw}{\boldsymbol{w}}
\newcommand{\inner}[2]{{#1}\cdot{#2}}
\newcommand{\spann}[1]{\left\langle{#1}\right\rangle}
\newcommand{\herm}{\operatorname{Herm}}
\newcommand{\hess}{\operatorname{Hess}}
\newcommand{\sgn}{\operatorname{sgn}}
\newcommand{\trace}{\operatorname{trace}}
\newcommand{\tr}{\operatorname{trace}}
\newcommand{\rank}{\operatorname{rank}}
\newcommand{\im}{\operatorname{Im}}
\newcommand{\first}{{\rm I}}
\newcommand{\second}{{\rm I\!I}}
\newcommand{\firstmat}{I}
\newcommand{\secondmat}{I\!I}
\renewcommand{\phi}{\varphi}
\renewcommand{\hom}{\operatorname{Hom}}
\renewcommand{\Gamma}{\varGamma}
\newcommand{\ep}{\varepsilon}
\newcommand{\RR}{{\mathcal R}}
\newcommand{\M}{{\mathscr M}}
\newcommand{\E}{{\mathscr E}}
\newcommand{\D}{{\mathscr D}}
\newcommand{\K}{{\mathcal K}}
\newcommand{\coef}{\operatorname{coef}}
\pagestyle{plain}
\renewcommand{\thefootnote}{\fnsymbol{footnote}}
\newcommand{\bgamma}{\vect{\gamma}}
\newcommand{\bxi}{\vect{\xi}}
\newcommand{\bnu}{\vect{\nu}}
\newcommand{\n}{\vect{n}}
\newcommand{\bn}{\vect{n}}
\newcommand{\be}{\vect{e}}
\newcommand{\e}{\vect{e}}
\newcommand{\bb}{\vect{b}}
\newcommand{\Gr}{\operatorname{Gr}}
\newcommand{\trans}[1]{{\vphantom{#1}}^t{\!#1}}
\begin{document}
\title{Geometry on principal curvature surfaces}
\author{
Luciana F. Martins,
Kentaro Saji,
Samuel P. dos Santos\\ and\\
Runa Shimada}
\date{\today}
\maketitle

\begin{abstract}
	We study the geometry of the principal curvature surface associated with a Whitney umbrella. This surface is obtained by extending the Whitney umbrella in the normal direction by its bounded principal curvature and admits a natural geometric interpretation. We prove that its intersection with the normal plane coincides with the pedal curve of the curvature parabola and deduce that the focal conic is the inversion of this pedal curve.
	We then investigate the geometry of the principal curvature surface along the exceptional set by  studying its geodesic and normal curvatures, as well as its Gaussian and mean curvatures, and determining all possible generic numbers of zeros of these curvature functions.
\end{abstract}
}
\section{Introduction}
A classical construction in differential geometry associates to a regular surface its focal surfaces, obtained by extending the surface in the normal direction by the reciprocals of its principal curvatures. In this paper, we investigate the geometry of the surfaces obtained by extending the surface in the normal direction by the principal curvatures themselves, which we call \emph{principal curvature surfaces}. We show that these surfaces also admit a natural geometric interpretation.

A remarkable feature of principal curvature surfaces is their relation with the curvature parabola. If one applies the definition of the curvature parabola for singular surfaces introduced by the first author and  Nu\~no-Ballesteros in \cite{mn} to a regular surface, then the curvature parabola degenerates into the line segment on the normal line whose endpoints correspond to the two principal curvatures. Consequently, the two principal curvature surfaces are precisely the endpoints of this segment. For a corank one singular surface, the curvature parabola is a plane curve in the normal plane that encodes the second-order geometry of the singularity and generalizes the curvature ellipse of regular surfaces. In the case of a Whitney umbrella, it is a genuine parabola.

This observation naturally raises the question of whether such a geometric interpretation persists for singular surfaces. Among singular surfaces in $\R^3$, Whitney umbrellas provide an ideal testing ground. They are stable singularities and possess a rich differential geometry, including well-defined principal curvatures, a curvature parabola, and a direction-dependent unit normal vector. Consequently, Whitney umbrellas  provide a natural and nontrivial setting in which the geometry of principal curvature surfaces can be investigated.

A map-germ
$f:(\R^2,0)\to(\R^3,0)$
is a Whitney umbrella if it is $\mathcal A$-equivalent to
$(u,v)\mapsto (u,v^2,uv)$
at the origin. Here, two map-germs $f_1, f_2:(\R^2,0)\to(\R^3,0)$
are said to be {\it  $\A$-equivalent} if there exist  germs of diffeomorphisms
$\lambda:(\R^2,0)\to(\R^2,0) $ and
$\mu:(\R^3,0)\to(\R^3,0) $ such that $\mu \circ f_1 \circ \lambda^{-1}= f_2$.  Moreover,
we denote by $S(f)$ the set of
singular points of $f$. For a Whitney umbrella
$f:(\R^2,0)\to(\R^3,0)$,
a direction-dependent unit normal vector can be defined via the blow-up of the source space \cite{fh-fronts}.  In this setting, only one principal curvature extends smoothly to the singular point, and hence there exists a unique principal curvature surface associated with the Whitney umbrella.

Let   $f:(\R^2,0)\to(\R^3,0)$ be a Whitney umbrella.  Let $\M$ denote the blow-up of the source plane at the origin, which records the incident directions at the singular point, and let $\M_0\subset\M$ be the open subset consisting of directions for which the radial vector does not belong to the kernel of $df$
(see Section \ref{sec:priwu} for details).
On $\M_0$, one of the principal curvatures extends smoothly, while the other diverges.
As a result, there is a unique principal curvature surface associated with the Whitney umbrella near the singularity.
We define
$$
h_{\kappa_1}=f+\kappa_1\nu:\M_0\to\R^3,
$$
where $\kappa_1$ is the smoothly extending principal curvature and
$\nu$ is a unit normal vector.
 Let $l^\perp=\big(df_0(T_0\R^2)\big)^\perp$ be the normal plane and  $\gamma:\R\to l^\perp$ be the curvature parabola of $f$. Let $P_{(\gamma,0)}$ be the pedal curve of $\gamma$ in $l^\perp$ with respect to the origin $0$ of $l^\perp\subset\R^3$.

Our first result shows that the principal curvature surface admits a natural extension of the geometric interpretation valid for regular surfaces: the intersection of the principal curvature surface with the normal plane coincides with the pedal curve of the curvature parabola. In this sense, the curvature parabola unifies the regular and Whitney umbrella cases, and the principal curvature surfaces are naturally encoded by this object in both settings. We formulate this result precisely in the following theorem.

\begin{introtheorem}\label{thm:pricipalpedal}
	Let\/ $f:(\R^2,0)\to(\R^3,0)$ be a Whitney umbrella,
	and let\/ $h_{\kappa_1}$ be the principal 
curvature surface
	associated with the principal curvature
	that extends smoothly to\/ $\M_0$.
	Then the intersection of the image 
of\/ $h_{\kappa_1}$ with
	the normal plane\/ $l^\perp$ coincides 
with the image of the pedal curve of
	the curvature parabola\/ $\gamma$ with 
respect to the origin:
	$$
	h_{\kappa_1}(\M_0)\cap l^\perp=P_{(\gamma,0)}(\R).
	$$
\end{introtheorem}

 The {\it focal set\/} of a map
$f:(\R^2,0)\to(\R^3,0)$ is defined as the set of points in $\R^3$
for which the distance squared function has a degenerate critical point.
For a regular surface, this coincides with the classical focal set.
For a Whitney umbrella, it lies in the normal plane
and is a conic \cite[Section 3.1]{fh-fronts}.
It is called the {\it focal conic\/} and is denoted by $FC(f)$.

 Our second result reveals an unexpected relationship between two fundamental invariants of a Whitney umbrella: the focal conic is obtained as the inversion of the pedal curve of the curvature parabola.

\begin{introtheorem}\label{cor:cocalinv}
	Let\/ $f:(\R^2,0)\to(\R^3,0)$ be a Whitney umbrella.
	Then the inversion in the unit circle of the image of the pedal curve of the curvature parabola with respect to the origin
	coincides with the focal conic of\/ $f$:
	$$
	i\big(P_{(\gamma,0)}(\R)\big)=FC(f).
	$$
\end{introtheorem}

Here, the {\it inversion in the unit circle\/}, or {\it inversion\/} for short,
is the map $i:\R^2\setminus \{0\}\to \R^2\setminus \{0\}$ 
defined by $i(x_1,x_2)=(x_1,x_2)/(x_1^2+x_2^2)$, and 
the inversion on $l^\perp$ is defined by 
identifying $(l^\perp\subset\R^3,0)$ with $(\R^2,0)$.
Note that this inversion differs from the complex $z\mapsto 1/z$.

The intersection of the principal curvature surface with the normal plane is the image of the exceptional set
$\E=\{(0,\theta)\in\M\}$
under $h_{\kappa_1}$.
The parameter $\theta$ of $\E$ corresponds to the direction from which points approach the Whitney umbrella singularity.
Hence, the geometry of the principal curvature surface along $\E$ reflects the geometry of the 
Whitney umbrella in the corresponding direction.

Motivated by this observation, we investigate the geodesic curvature and normal curvature of the exceptional set as a curve on the principal curvature surface, as well as the Gaussian and mean curvatures of the principal curvature surface along $\E$. For each of these functions we determine all possible  generic number of zeros. Here, the term \emph{generic} means that the corresponding property holds outside a proper algebraic subset of the space of Whitney umbrellas up to finite jet. Equivalently, these algebraic subsets are characterized 
by the vanishing of certain discriminants of polynomial 
expressions  and  by the vanishing of denominator and leading coefficients.
We note that the complement of an algebraic subset is open and dense.


The results of this paper reveal new relationships between principal curvature surfaces and the fundamental invariants of Whitney umbrellas, namely the curvature parabola and the focal conic. They also show that principal curvature surfaces provide a natural framework for studying curvature phenomena associated with singular surfaces.

Finally, the paper contains an appendix devoted to principal curvature surfaces associated with regular surfaces. To the best of our knowledge, this is the first systematic study of the singularities of principal curvature surfaces. In contrast with focal surfaces, principal curvature surfaces are not compatible with homothetic scaling, leading to a significantly different geometric behaviour.

The paper is organized as follows. In Section~2 we introduce principal curvature surfaces and prove their relation with the curvature parabola and the focal conic. Section~3 studies the geometry and singularities of principal curvature surfaces associated with Whitney umbrellas. Section~4 presents an explicit example. In Section~5 we investigate generic properties of the exceptional set and determine the number of zeros of several curvature functions. The appendix is devoted to the study of singularities of principal curvature surfaces associated with regular surfaces.


\section{Principal curvature surfaces}


\subsection{The curvature parabola and the principal curvature surface}
\label{sec:cpara}
Following \cite[Section 2.1]{mn}, we introduce
the curvature parabola.
Let $f:(\R^2,0)\to(\R^3,0)$ be a map-germ.
For a vector $X=a\partial_u+b\partial_v$,
we set $\first(X,X)=a^2E+2ab F+b^2G$, where
$E=f_u\cdot f_u$,
$F=f_u\cdot f_v$,
$G=f_v\cdot f_v$ are the coefficients of the
first fundamental form for a coordinate system $(u,v)$.
Furthermore, we set $\second(X,X)=\pi_l(a^2f_{uu}+2abf_{uv}+b^2f_{vv})$,
where $\pi_l:\R^3\to l^\perp$ is the orthogonal projection,
and $l=df_0(T_0\R^2)$ is the image of
the differential of $f$. 
If $\rank df_0=2$, then $l$ is the tangent plane, and hence
$l^\perp$ is the {\it normal line}.
If $\rank df_0=1$, then $l$ is a line,
and $l^\perp$ is the {\it normal plane}.
We set 
\begin{equation}\label{eq:secondxx}
\Delta=\{\second(X,X)|\first(X,X)=1,\ a,b\in\R\}.
\end{equation}
If $f:(\R^2,0)\to(\R^3,0)$ is a regular surface,
and $\nu:(\R^2,0)\to\R^3$ is its unit normal vector,
then $\Delta$ is a line segment 
$\{f+k\nu\,|\, \kappa_1\leq k\leq \kappa_2\}$,
where $\kappa_i$ $(i=1,2)$ are the principal curvatures
of $f$ satisfying $\kappa_1\leq \kappa_2$.
We assume $0$ is not an umbilic point.
Then $\kappa_i$ $(i=1,2)$ are $C^\infty$-functions.
We set 
$$
h_{\kappa_i}=f+\kappa_i\nu\quad(i=1,2),
$$
the maps that give the endpoints of the line segment mentioned above.
We call them the 
{\it principal curvature surfaces with respect to\/}
$\kappa_i$ $(i=1,2)$.
When $f$ has a singular point at $0$, 
if a unit normal vector $\nu$ along $f$ is well defined,
then one can define principal curvature surface in a natural way.
In fact, principal curvatures are defined as the solutions
of the equation
\begin{equation}\label{eq:preq}
\det\big(\secondmat-k\firstmat\big)=0,\quad
\left(
\firstmat=\pmt{E&F\\ F&G},\quad
\secondmat=\pmt{L&M\\ M&N}\right)
\end{equation}
in $k$,
where 
$L=f_{uu}\cdot\nu$,
$M=f_{uv}\cdot\nu$,
$N=f_{vv}\cdot\nu$ are the coefficients of the second
fundamental form,
provided that the solution $k$ is of class $C^\infty$.
A map-germ $f:(\R^n,0)\to (\R^{n+1},0)$ is called
a {\it frontal\/} if there exists
a map-germ $\nu:(\R^n,0)\to\R^{n+1}$ such that 
$df(X)\cdot\nu=0$ and $|\nu|=1$ hold for any 
$p\in(\R^n,0)$ and $X \in T_p\R^n$.
The map $\nu$ is called a {\it unit normal vector field\/} 
({\it unit normal\/} for short).
The map-germ $f$ is called a {\it front\/} if the map  $(f,\nu)$ is an immersion.



\subsection{Principal curvature surface of Whitney umbrella}
\label{sec:priwu}
Let $f:(\R^2,0)\to(\R^3,0)$ be a map-germ satisfying 
$\rank df_0=1$.
Then there exists a pair $(\xi,\eta)$ of vector fields 
such that they are linearly independent and 
$\eta$ generates the kernel of $df_0$.  
We call such a pair an {\it adapted\/} pair.
The map-germ $f$ is said to be $S$-{\it type\/}
(or {\it $SB$-type\/}) if
$f_\xi\times f_{\eta\eta}\ne0$ holds
for an adapted pair $(\xi,\eta)$.
Here, the directional derivative of a function $g$ along 
a vector field $\zeta$, written $\zeta g$, is denoted by $g_{\zeta}$. We note that $\zeta_2(\zeta_1g)=g_{\zeta_1\zeta_2}$.
For the meaning of $S$ and $B$,
see a classification of germs given in \cite{mond}.
In the classification, the series 
$S_k^\pm$-singularities $(k\geq0)$
forms its main part,
and all these singularities are of $S$-type.
Since a map-germ $f$ is of $S$-type if and only if
the two-jet of $f$ satisfies
$j^2 f(0,0)=(u,v^2,0)$ or $(u,v^2,uv)$,
a Whitney umbrella is of $S$-type.
The normal form of a Whitney umbrella is given in \cite{west};
see also \cite{bw}.
There are several studies on the geometry of  Whitney umbrellas
using this form, 
see for example, \cite{bw,fh-fronts,hhnuy,shimadas1,tari}.
A modified normal form that allows one to treat $S$-type germs
uniformly is given in \cite[Theorem 2.3]{shimadas1},
 where $S$-type germs
include all germs of codimension at most three in $C^\infty(2,3)$.
A similar normal form for a map-germ with 2-jet $(u,v^2,0)$
can also be found in \cite[Proposition 2.1]{fh-normal}.

\begin{proposition}{{\rm ($s=0$ case of \cite[Theorem 2.3]{shimadas1})}}
Let\/ $f:(\R^2,0) \to (\R^3,0)$ be an\/ $S$-type map-germ.
Then there exist an orientation preserving diffeomorphism-germ\/ 
$\phi : (\R^2, 0)$ $\to (\R^2, 0)$ and\/
$T \in SO(3)$ and\/ 
$f_{21}, f_{31}, f_{321}, f_{331} \in C^\infty(1,1)$, 
$f_{322} \in C^\infty(2,1)$ such that
\begin{eqnarray}
&&T \circ f \circ \phi^{-1}(u,v)\nonumber\\
&=&\big(u,u^2f_{21}(u)+v^2,
u^2f_{31}(u)+v^2(uf_{321}(u)+vf_{322}(u,v))+uv f_{331}(u)\big).
\label{eq:wunormal}
\end{eqnarray}\nonumber
If\/ $f$ is a Whitney umbrella, then\/
$f_{331}(0)>0$.
\end{proposition}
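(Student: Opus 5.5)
We prove the normal form \eqref{eq:wunormal} by successive normalizations: first fix the $2$-jet by a rotation and a source change of coordinates, then use the freedom in the source coordinates to remove the unwanted terms. Since $f$ is of $S$-type, the $2$-jet condition shows that after a rotation $T\in SO(3)$ we may assume $df_0(T_0\R^2)$ is the $x_1$-axis. We may also assume that $f_{\eta\eta}(0)$ has a nonzero component along the $x_2$-axis, orthogonal to $f_\xi(0)$; this is where $f_\xi\times f_{\eta\eta}\ne0$ is used. We choose $T$ orientation preserving, adjusting the sign of the $x_3$-axis if needed.

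\textbf{Step 1 (first coordinate).} Write $f=(f^1,f^2,f^3)$. Since $f^1_u(0)\neq0$ after relabeling the source coordinates, the map $(u,v)\mapsto(f^1(u,v),v)$ is a local diffeomorphism. Taking it as new coordinates gives $f^1=u$, and we compose with a reflection if necessary to keep $\phi$ orientation preserving.

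\textbf{Step 2 (second coordinate).} Now $f^2(u,v)$ satisfies $f^2_v(0,0)=0$ (because $\partial_v$ spans the kernel) and $f^2_{vv}(0,0)>0$, possibly after changing signs. The function $v\mapsto f^2(u,v)-f^2(u,v_0(u))$ has a nondegenerate critical point $v_0(u)$, which depends smoothly on $u$ by the implicit function theorem. The parametric Morse lemma then gives a change $v\mapsto \tilde v(u,v)$ with $\tilde v(u,v_0(u))=0$, $\tilde v_v>0$, and
$$f^2=g(u)+\tilde v^2,$$
where $g(u)=f^2(u,v_0(u))$. This change preserves $f^1=u$ and the orientation. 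From $g(0)=g'(0)=0$ we get $g=u^2f_{21}(u)$.

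\textbf{Step 3 (third coordinate).} Expand $f^3(u,v)=a(u)+v\,b(u)+v^2c(u,v)$ by Hadamard's lemma. Since $df_0$ has rank one with image the $x_1$-axis, we have $f^3(0)=0$, $f^3_u(0)=0$ and $f^3_v(0)=0$. Hence $a=u^2f_{31}(u)$ and $b(0)=0$, so $b=uf_{331}(u)$. Next split $c(u,v)=c(u,0)+v\,c'(u,v)$. The remaining requirement is $c(0,0)=0$, so that $c(u,0)=uf_{321}(u)$. To arrange this we apply a rotation in the $x_2x_3$-plane, $(x_2,x_3)\mapsto(x_2,x_3-\lambda x_2)$, corrected to an element of $SO(3)$.

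\textbf{Main obstacle.} A rotation mixes $x_2$ and $x_3$, so after it the second coordinate is no longer exactly $g+v^2$, and one cannot just subtract a multiple of $x_2$. The fix is to choose the rotation first: rotate about the $x_1$-axis so that the $v^2$-coefficient of the $2$-jet lies entirely in the $x_2$-direction. This can be done because that coefficient is the nonzero vector $f_{vv}(0)/2$ projected to the normal plane. Only after this rotation do we perform Steps 1–3. Step 2 does not alter the third component's $2$-jet structure, since it only reparametrizes, and the $v^2$-coefficient of $f^3$ at $u=0$ remains $0$. Tracking exactly that the $v^2$ coefficient of $f^3$ at the origin stays zero after the Morse-lemma change of coordinates is the delicate point. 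I would check it by noting that $\tilde v=v+O(|u|+v^2)$ and computing the $2$-jet.

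\textbf{Whitney umbrella condition.} The $2$-jet of the result is $(u,v^2,f_{331}(0)uv)$. The germ is a Whitney umbrella iff $f_{331}(0)\neq0$, which is equivalent to $j^2f\sim(u,v^2,uv)$. A reflection $v\mapsto -v$ combined with $x_3\mapsto -x_3$, and, to keep $T\in SO(3)$, $x_2\mapsto -x_2$ is not allowed since it would change $v^2$. Instead we use $(u,v)\mapsto(u,-v)$ together with $T=\mathrm{diag}(1,-1,-1)$ composed with a further fix. I expect this sign bookkeeping, keeping both $\phi$ and $T$ orientation preserving while making $f_{331}(0)>0$, to require a careful choice of which pair of axes to flip.
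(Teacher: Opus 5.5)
The paper gives no proof of this proposition; it is quoted from Shimada's Theorem 2.3. Your direct construction is therefore the natural route, and its first part is sound once you adopt the reordering from your ``main obstacle'' paragraph. That is, rotate about the $x_1$-axis first, so that the normal component of $f_{\eta\eta}(0)$ is a positive multiple of $e_2$; this component is well defined up to a positive factor and nonzero by the $S$-type condition. After Step 1 the kernel is spanned by $\partial_v$, and Step 2 cannot spoil $f^3_{vv}(0)=0$. Indeed, since $f^3_v(0)=0$, in coordinates $(u,\tilde v)$ one has $\partial_{\tilde v}^2 f^3(0)=f^3_{vv}(0)\,(\partial v/\partial\tilde v)^2(0)=0$, whatever the $u$-dependence of $\tilde v$. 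Two small corrections. Your expansion $\tilde v=v+O(|u|+v^2)$ is slightly off, since the $v$-coefficient is $\sqrt{f^2_{vv}(0)/2}$, but this is harmless. The orientation fix in Step 1 must be $v\mapsto -v$, because flipping $u$ would destroy $f^1=u$.

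The genuine gap is the final sign normalization: you leave it open, and the move you propose fails. With $(u,v)\mapsto(u,-v)$ and $T=\mathrm{diag}(1,-1,-1)$, three things go wrong:
the second component becomes $-u^2f_{21}(u)-v^2$;
$\phi$ becomes orientation reversing;
the $uv$-coefficient of the third component does not even change sign, because the source and target sign changes cancel.
The same cancellation defeats your first attempt $v\mapsto -v$, $x_3\mapsto -x_3$.

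The missing observation is this. The sign changes compatible with $x_1=u$ and $x_2=u^2f_{21}(u)+v^2$ are $(u,v)\mapsto(\epsilon_1u,\epsilon_2v)$ combined with $T=\mathrm{diag}(\epsilon_1,1,\epsilon_3)$, $\epsilon_i=\pm1$, and they multiply the $uv$-coefficient by $\epsilon_1\epsilon_2\epsilon_3$. Preserving orientation of source and target forces $\epsilon_2=\epsilon_1$ and $\epsilon_3=\epsilon_1$. So the only useful choice is $(u,v)\mapsto(-u,-v)$ together with the rotation by $\pi$ about the $x_2$-axis, $\mathrm{diag}(-1,1,-1)\in SO(3)$. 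This preserves the normal form, replacing $f_{21}(u)$, $f_{31}(u)$, $f_{321}(u)$, $f_{322}(u,v)$, $f_{331}(u)$ by $f_{21}(-u)$, $-f_{31}(-u)$, $f_{321}(-u)$, $f_{322}(-u,-v)$, $-f_{331}(-u)$; in particular it reverses the sign of $f_{331}(0)$. Combined with the criterion that $f$ is a Whitney umbrella if and only if $\det(f_u,f_{uv},f_{vv})(0)=-2f_{331}(0)\neq0$, this completes the proof.
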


Let $f:(\R^2,0) \to (\R^3,0)$ be a Whitney umbrella.
Then the above normal form, namely, the functions
$f_{21}$, $f_{31}$, $f_{321}$, $f_{331}$, $f_{322}$,
are uniquely determined by $f$.
Hence, the coefficients of the functions appearing in \eqref{eq:wunormal}
are geometric invariants.
We set
$W_{21}(f)$,
$W_{31}(f)$,
$W_{331}(f)\in\R$ to be
the coefficients
$f_{21}(0)$,
$f_{31}(0)$,
$f_{331}(0)$
when $f$ is written in the form
\eqref{eq:wunormal}.

If $f$ is a Whitney umbrella, then
the set $\{\second(X,X)|\first(X,X)=1,\ a,b\in\R\}$ at $0$ is a non-degenerate parabola
(\cite[Theorem 2.5]{mn}),
called the {\it curvature parabola}.
To study the principal curvature surface, we discuss
the normal vector and
the principal curvatures near a Whitney umbrella.
In \cite{fh-fronts}, it is shown that
via the blow-up, a Whitney umbrella can be
regarded as a front.
To describe these properties within the framework of this paper,
we examine them in our context.

Let $f:(\R^2,0) \to (\R^3,0)$ be a Whitney umbrella.
Let $S^1=\R/2\pi\Z$ be the circle, and
let $\sim$ be an equivalence relation
on the set $\R\times S^1$ defined by
$(r,\theta)\sim(r',\theta')$ if
they are equal, or $(r',\theta')=(-r,\theta+\pi)$.
We set $\M$ to be the quotient space, and $\pi:\M\to\R^2$ to be the
natural map $(r,\theta)\mapsto(r\cos\theta,r\sin\theta)$,
which is usually called the blow-up.
Namely, $\M$ is
a space that records the incident directions at the origin.
We set 
$$
\M_0=\{(r,\theta)\in \M\,|\,\text{ if }r=0,\text{ then }
\partial_r
\not\in \ker df\text{ at }(0,\theta)\},
$$
which is an open set in $\M$.
We identify $f(u,v)$ with $f\circ\pi(r,\theta)$ and
write 
\begin{equation}\label{eq:bl}
f(r\cos\theta,r\sin\theta)=f\circ\pi(r,\theta).
\end{equation}

We call the set $\E=\{(0,\theta)\in\M\}$ the
{\it exceptional set}. As shown in \cite[Section 2]{fh-fronts}, 
if $f(u,v)$ is a Whitney umbrella, then
a unit normal vector can be defined as a map from $\M$. Namely, 
$\nu(r\cos\theta,r\sin\theta):(\M,\E)\to\R^3$, 
a map-germ along $\E$, is 
well-defined.
In fact, if $f$ is written in the form \eqref{eq:wunormal},
then $r$ can be factored out from $f_v$.
We set $\phi=f_v(r\cos\theta,r\sin\theta)/r$.
Then $f_u|_{r=0}=(1,0,0)$ and 
$\phi|_{r=0}=(0,2 \sin\theta,\cos\theta f_{331}(0))$. 
Since $f_u\times\phi\neq0$,
it determines a non-zero normal vector.
	Thus we obtain the well-defined unit normal vector
$\nu(r\cos\theta,r\sin\theta)$.
We remark that 
$$
\big\{\langle\nu|_{r=0}\rangle_{\R}\,\big|\,\theta\in S^1\big\}
=
(1,0,0)^\perp,
$$
which is the normal plane of $df_0(T_0\R^2)$.
	Moreover, as pointed out in \cite[Lemma 2.2]{fh-fronts},
one principal curvature extends smoothly to $r=0$,
whereas the other is unbounded there.
More precisely, we prove the following lemma. We set\/
$\E_0=\{(0,\theta)\in\E\,|\,\cos\theta\ne0\}$ and remark that $\E_0$ is closed in $\M_0$.

\begin{lemma}\label{lem:k1cinf}
The bounded principal curvature\/
$\kappa_1:(\M_0,\E_0)\to\R$
extends smoothly to\/ $\E_0$.
In particular, $\kappa_1$ is a\/ $C^\infty$-function.
Moreover, the  associated principal curvature surface $$h_{\kappa_1}(r\cos\theta,r\sin\theta)
=f(r\cos\theta,r\sin\theta)+\kappa_1(r\cos\theta,r\sin\theta)
\nu(r\cos\theta,r\sin\theta)$$
defines a\/ $C^\infty$-map\/
$(\M_0,\E_0)\to\R^3$.
\end{lemma}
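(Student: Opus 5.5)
We work with $f$ in the normal form \eqref{eq:wunormal} and pull back to $\M$ via $(u,v)=(r\cos\theta,r\sin\theta)$. The idea is to write the principal curvature equation \eqref{eq:preq} in a frame in which one coefficient vanishes to second order in $r$. The bounded root can then be obtained from a quadratic whose leading coefficient is a unit near $\E_0$.

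\textbf{Step 1 (fundamental forms).} Use the frame $f_u,\ \phi=f_v/r$, which is smooth on $\M$ with $f_u\times\phi\neq0$ along $\E$. Compute the coefficients
$$
E=f_u\cdot f_u,\quad \tilde F=f_u\cdot\phi,\quad \tilde G=\phi\cdot\phi,\qquad L=f_{uu}\cdot\nu,\quad \tilde M=f_{uv}\cdot\nu,\quad N=f_{vv}\cdot\nu.
$$
All of these are smooth functions on $\M$, since $\nu$ is smooth there. In the coordinates $(u,v)$ we have $F=r\tilde F$ and $G=r^2\tilde G$, so
$$
\det\firstmat=r^2(E\tilde G-\tilde F^2)=:r^2\delta,
$$
where $\delta$ is smooth and $\delta|_{r=0}=4\sin^2\theta+f_{331}(0)^2\cos^2\theta>0$ (recall $f_{331}(0)>0$ for a Whitney umbrella). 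Equation \eqref{eq:preq} reads
$$
r^2\delta\,k^2-(EN-2rM\tilde F+r^2\tilde G L)k+(LN-M^2)=0.
$$

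\textbf{Step 2 (vanishing orders).} This is the main obstacle: we must show that $M$ and $N$ carry factors of $r$, so that the equation can be divided by $r^2$.

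For $N$: write $f_{vv}=\partial_v(r\phi)$. Using $\phi\cdot\nu=0$, we get $N=r\,(\partial_v\phi)\cdot\nu$. So $N=r\tilde N$, and we compute $\tilde N|_{r=0}$ explicitly. From the normal form, $f_{vv}|_{(0,0)}=(0,2,0)$. The unit normal along $\E$ is proportional to
$$
(1,0,0)\times(0,2\sin\theta,f_{331}(0)\cos\theta)=(0,-f_{331}(0)\cos\theta,2\sin\theta).
$$
Hence $N|_{r=0}=-2f_{331}(0)\cos\theta/\sqrt{\delta}$, which means $N$ itself need not vanish; we must recheck the claim that $N$ carries a factor of $r$.

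For $M$: $f_{uv}|_0=(0,0,f_{331}(0))$, so $M|_{r=0}=2f_{331}(0)\sin\theta/\sqrt\delta$, again nonzero in general.

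So the factorization is not in $N$ or $M$ but in the combination: the correct move is to divide the whole equation by $r^2$ and substitute $k=\kappa$. Set
$$
A=\delta,\qquad B=(EN-2rM\tilde F+r^2\tilde GL)/r^2 .
$$
Since $B$ is singular, instead solve for $\mu=1/k$, the reciprocal curvature:
$$
(LN-M^2)\mu^2-(EN-2rM\tilde F+r^2\tilde GL)\mu+r^2\delta=0 .
$$
At $r=0$ this becomes $\mu\big((LN-M^2)\mu-EN\big)=0$. Generically $LN-M^2\neq0$ at the origin (it equals $-4f_{331}(0)^2/\delta$ plus a term $L N$ with $L=2f_{21}$-type data). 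The root $\mu=0$ is simple at $r=0$ exactly when $EN\ne0$, i.e.\ when $\cos\theta\neq0$, which is precisely the definition of $\E_0$. By the implicit function theorem, this root gives a smooth $\mu_2$ with $\mu_2=O(r^2)$; it corresponds to the unbounded curvature. The other root is
$$
\mu_1=\frac{EN-2rM\tilde F+r^2\tilde GL}{LN-M^2}-\mu_2 ,
$$
whose value at $r=0$ is $EN/(LN-M^2)$, nonzero on $\E_0$.

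\textbf{Step 3 (conclusion).} Rather than use $\mu_1$, which fails where $LN-M^2$ vanishes, express $\kappa_1$ directly via Vieta:
$$
\kappa_1=\frac{r^2\delta}{\ldots}\cdot\frac{1}{\mu_2}\quad\text{or equivalently}\quad \kappa_1=\frac{LN-M^2}{r^2\delta\,\mu_1}\cdot\ldots,
$$
or more cleanly $\kappa_1=\dfrac{LN-M^2}{EN-2rM\tilde F+r^2\tilde GL-r^2\delta\,\kappa_2^{-1}}$ with $\kappa_2^{-1}=\mu_2$ smooth and $O(r^2)$. The denominator restricts to $EN\neq0$ on $\E_0$, so $\kappa_1$ is smooth near $\E_0$, with
$$
\kappa_1|_{\E_0}=\frac{LN-M^2}{EN}.
$$
It coincides with the bounded principal curvature for $r\neq0$. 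Smoothness of $h_{\kappa_1}=f+\kappa_1\nu$ on $(\M_0,\E_0)$ then follows from the smoothness of $f\circ\pi$, $\nu$ and $\kappa_1$.

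The delicate point is verifying the simplicity of the root $\mu_2=0$. This rests on the explicit value $N|_{r=0}\propto\cos\theta$, which ties it exactly to $\E_0$.
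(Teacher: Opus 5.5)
Your route differs from the paper's, and its first half is sound. Write $A_2=EG-F^2=r^2\delta$, $A_1=-(EN-2rM\tilde F+r^2\tilde GL)$, $A_0=LN-M^2$ as in the paper. The paper keeps the rationalized root $\kappa_1=-2A_0/(A_1\pm\sqrt{D})$. It notes that $D|_{r=0}=\cos^2\theta\,\tilde D(\theta)>0$ on $\E_0$, and fixes the sign of $\sqrt D$ so that the denominator equals $2A_1\neq0$ at $r=0$. You avoid square roots by applying the implicit function theorem to the reciprocal equation $A_0\mu^2+A_1\mu+A_2=0$. At $r=0$ the root $\mu=0$ has $\mu$-derivative $A_1=-EN\neq0$ exactly on $\E_0$, whether or not $A_0$ vanishes. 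So $\mu_2=1/\kappa_2$ is smooth, zero on $\E_0$, and nonzero for $r\neq0$. That part is correct.

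The gap is in Step 3, where you convert $\mu_2$ back into $\kappa_1$. Your final formula reads $\kappa_1=A_0/(-A_1-A_2\mu_2)$, and it is false. The bounded root satisfies $\kappa_1(-A_1-A_2\kappa_1)=A_0$, so for $r\neq0$ your formula is equivalent to $\kappa_1=\mu_2$, i.e.\ $\kappa_1\kappa_2=1$. But $\kappa_1\kappa_2=A_0/A_2$ is the Gaussian curvature of $f$, which is unbounded near $\E_0$. The two garbled ``Vieta'' expressions before it are also inverted. So the function you prove smooth is not the principal curvature, and ``It coincides with the bounded principal curvature for $r\neq0$'' is unjustified; only the value $A_0/(EN)$ on $\E_0$ happens to be right.

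The correct use of Vieta is the product of the roots, $A_2\kappa_1=A_0\mu_2$, which gives
\[
\kappa_1=\frac{A_0}{-A_1-A_0\,\mu_2}.
\]
This is a quotient of smooth functions whose denominator restricts to $EN\neq0$ on $\E_0$, and it correctly gives $\kappa_1=0$ where $A_0=0$. The detour through $\mu$ is in fact unnecessary. At $r=0$ the equation $A_2k^2+A_1k+A_0=0$ degenerates to the linear equation $-EN\,k+A_0=0$, whose $k$-derivative is nonzero on $\E_0$. So the implicit function theorem applied directly in $k$ already gives the smooth bounded root.

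Also delete the retracted claim $N=r\tilde N$ in Step 2 and the assertion that $LN-M^2\neq0$ generically on $\E$. It vanishes wherever $f_{331}(0)t^2+2f_{31}(0)t-f_{21}(0)f_{331}(0)=0$ with $t=\tan\theta$, which is exactly where $\mu_1$ breaks down.
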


\begin{proof}
Although the first assertion follows from
\cite[Lemma 2.2]{fh-fronts},
we give a proof in our notation.
Let $E,F,G,L,M,N$ be the coefficients of
the first and second fundamental forms,
regarded as functions of $(r,\theta)$.
	The principal curvatures are the solutions of
\eqref{eq:preq}, namely,
\begin{equation}\label{eq:preqap}
A_2k^2+A_1k+A_0=0
\end{equation}
in $k$, where
$A_2=EG-F^2$, 
$A_1=-(EN-2FM+GL)$, 
$A_0=LN-M^2$.
	We denote the discriminant of \eqref{eq:preqap} by $D=A_1^2-4A_2A_0$.
Then the principal curvatures are given by
\begin{equation}\label{eq:kpm}
k_\pm
=
\dfrac{-A_1\pm\sqrt{D}}{2A_2}
=
\dfrac{-A_1^2+ D}{2A_2(A_1\pm\sqrt{D})}
=
\dfrac{-2A_0}{A_1\pm\sqrt{D}}.
\end{equation}
If $f$ is written as in the form  \eqref{eq:wunormal},
then $D$ is a $C^\infty$-function satisfying
$$
D|_{r=0}=
\dfrac{4 \cos^2\theta f_{331}(0)^2}{\delta^2},
$$
where $\delta=\sqrt{\cos^2\theta f_{331}(0)^2+4 \sin^2\theta}$.
We set
$\tilde{D}(\theta)=
4 f_{331}(0)^2/\delta^2$
and
$$D_1(r,\theta)=D(r,\theta)-\cos^2\theta \tilde{D}(\theta).$$
Thus setting $\pm|\cos\theta|=\cos\theta$, 
$$
-A_1\pm\sqrt{D}=
-A_1+\cos\theta\sqrt{\tilde{D}(\theta)+D_1(r,\theta)/\cos^2\theta}
$$
holds and we see
$$
A_1|_{r=0}=
\dfrac{2 \cos\theta f_{331}(0)}
{\delta},\quad
D_1(0,\theta)=0.
$$
Thus taking a representative 
$(r,\theta)\in\M\cap\{(r,\theta)\,|\,\cos\theta\ne0\}$
with $\cos\theta>0$, we obtain
\begin{equation}\label{eq:k1}
\kappa_1
=
\dfrac{-2A_0}{A_1+\cos\theta\sqrt{\tilde{D}+D_1/\cos^2\theta}}
\end{equation}
	which is the bounded principal curvature.
Since the denominator does not vanish on a sufficiently small neighborhood of $\E_0$,
it follows that $\kappa_1$ is a $C^\infty$-function.

The second assertion follows immediately from the definition of $h_1$ and the smoothness of $\kappa_1$ and $\nu$.
\end{proof}


\subsection{Pedal curve}

 Let $\gamma:(\R,0)\to(\R^2,0)$ be a frontal-germ.
	We denote by $\n$ a unit normal vector field of $\gamma$.
	We fix a point $p\in\R^2$.
	Then the pedal curve $P_{(\gamma,p)}$ of $\gamma$ with respect to $p$
	is defined by
	$$
	P_{(\gamma,p)}=\gamma+
	\dfrac{\e\cdot(p-\gamma)}{\e\cdot\e}\e,
	$$
	where $\e$ is a nowhere-vanishing vector field perpendicular to $\n$.
	
	We prove Theorem \ref{thm:pricipalpedal}.
	
	\begin{proof}[Proof of Theorem \ref{thm:pricipalpedal}]
		Let us assume that $f$ is given in the form \eqref{eq:wunormal}.
		Since $f_u(0,0)=(1,0,0)$ and $f_v(0,0)=(0,0,0)$,
		we have
		$\first(X,X)=a^2$ for $X=a\partial_u+b\partial_v$.
		Thus $a=\pm1$ and $l=\langle(1,0,0)\rangle$.
		
		By
		$f_{uu}(0,0)=2(0,f_{21}(0),f_{31}(0))$,
		$f_{uv}(0,0)=(0,0,f_{331}(0))$,
		and
		$f_{vv}(0,0)=(0,2,0)$,
		we have
		$$
		\second(X,X)=2a^2\trans{\pmt{f_{21}(0)\\f_{31}(0)}}
		+2ab\trans{\pmt{0\\f_{331}(0)}}
		+2b^2\trans{\pmt{1\\0}},
		$$
		where $\trans{(~)}$ is transposition.
		Thus the curvature parabola is parametrized by
		$$
		c(b)=2(f_{21}(0)+b^2,f_{31}(0)+bf_{331}(0)).
		$$
	Since $c'(b)=2(2b,f_{331}(0))\ne0$, the pedal $\trans{P_{(\gamma,0)}}$
	is
	\begin{align}
		&
		2\pmt{f_{21}+b^2\\ f_{31}+bf_{331}}
		-2\dfrac{2b^3+b(2f_{21}+f_{331}^2)+f_{331}f_{31}}{4b^2+f_{331}^2}
		\pmt{2b\\ f_{331}}\nonumber\\
		=&
		\dfrac{2}{4b^2+f_{331}^2}
		\pmt{-f_{331}(-f_{21}f_{331}+2f_{31}b+f_{331}b^2)
			\\    2b     (-f_{21}f_{331}+2f_{31}b+f_{331}b^2)}
		\label{eq:pedcurp}
	\end{align}
	
		Here and throughout the proof, we omit the evaluation values $(0)$ and $(0,0)$ of the functions.
		On the other hand, for the principal curvature surface,
		we have
$E=1$, $F=G=0$,
$L=(-2 \cos\theta f_{21} f_{331}+4 f_{31} \sin\theta)/\delta$, 
$M=2 f_{331} \sin\theta/\delta$, $N=-2 f_{331} \cos\theta/\delta$
at $0$
and
$(\kappa_1)|_{r=0}=
2 (-f_{21} f_{331}\cos^2\theta+2 f_{31} \cos\theta\sin\theta+f_{331} \sin^2\theta)
/
(\cos\theta\delta)$.
Thus, considering $(h_{\kappa_1})|_{r=0}$ 
given in Lemma \ref{lem:k1cinf},
as a map into $l^\perp$, we obtain
\begin{align}
\trans{(h_{\kappa_1})|_{r=0}}&=
2 \dfrac{-f_{21} f_{331}\cos^2\theta+2 f_{31} \cos\theta\sin\theta
+f_{331} \sin^2\theta}
{\cos\theta\delta^2}
\pmt{
-\cos\theta f_{331}\nonumber\\
2 \sin\theta}\\
&=
2 \dfrac{-f_{21} f_{331}+2 f_{31} \tan\theta
+f_{331} \tan^2\theta}
{f_{331}^2+4\tan^2\theta}
\pmt{
-f_{331}\\
2 \tan\theta}.\label{eq:pricur}
\end{align}
Setting $b=\tan\theta$, we see that 
\eqref{eq:pedcurp} coincides with
\eqref{eq:pricur}.
	\end{proof}

Since the focal conic of a Whitney umbrella is the set of  points 
$x=(x_1,x_2,x_3)\in\R^3$
for which
$d_x(u,v)=|f(u,v)-x|^2$ has
a degenerate critical point at $(u,v)=(0,0)$, 
 its defining equations are $x_1=0$ and 
$FC(x_2,x_3)=0$, where 
$FC(x_2,x_3)=4 f_{21}(0) x_2^2  + 4 f_{31}(0) x_2 x_3 - f_{331}(0)^2 
x_3^2-2x_2$.
\begin{proof}[Proof of Theorem \ref{cor:cocalinv}]
Since the focal set is obtained by extending 
in the normal direction by the reciprocal of 
the principal curvature, while the principal curvature surface 
is obtained by extending in the normal direction 
by the principal curvature itself, 
the claim follows immediately from Theorem \ref{thm:pricipalpedal}. 
We include here a proof based on an explicit computation.

By \eqref{eq:pedcurp}, the inversion of the pedal with respect
to the origin of the curvature parabola $\gamma$ is parameterized by
\begin{align*}
i\big(P_{(\gamma,0)}(b)\big)&\\
& \hspace{-18mm}=\left(
-\frac{f_{331}(0)}{4 b f_{31}(0) + 2 b^2 f_{331}(0) - 2 f_{21}(0) f_{331}(0)},
\frac{b}{2 b f_{31}(0) + b^2 f_{331}(0) - f_{21}(0) f_{331}(0)}\right).
\end{align*}
A direct computation shows that  $FC\big(i\big(P_{(\gamma,0)}(b)\big)\big)=0$.
Therefore,
$i\big(P_{(\gamma,0)}(\R)\big)$
coincides with the focal conic, proving the assertion.
\end{proof}

\section{Geometry of the principal curvature surface around Whitney umbrella}

We study the geometry of the principal curvature surface
around a Whitney umbrella.
Let $f:(\R^2,0)\to(\R^3,0)$ be a Whitney umbrella,
and let $h=h_{\kappa_1}$ denote the principal curvature surface associated with
the bounded principal curvature $\kappa_1$.
Since the calculations are quite involved and the resulting formulas are lengthy,
we use the software Mathematica throughout this paper.
For the same reason,
we compute geometric invariants only up to non-zero scalar factors;
that is, we determine only whether each invariant vanishes or not.
Let $x$ and $y$ be functions or vector-valued functions.
If there exists a nowhere-vanishing function $\lambda$ such that
$y=\lambda x$, then we say that
$x$ and $y$ are {\it proportional} and denote
this equivalence relation by $x\simeq y$.

\subsection{Geometric invariants of a Whitney umbrella}
As we mentioned in Section \ref{sec:priwu},
$W_{21}(f)$,
$W_{31}(f)$,
$W_{331}(f)\in\R$ are 
the coefficients 
$f_{21}(0)$,
$f_{31}(0)$,
$f_{331}(0)$ when  $f$ is written in the form
\eqref{eq:wunormal}.
These values
$W_{21}(f)$,
$W_{31}(f)$,
$W_{331}(f)$ are geometric invariants of an $S$-type germ
$f:(\R^2,0)\to(\R^3,0)$.
Formulas for these invariants were originally given in \cite[(12,13,14)]{hhnuy},
and the form  used in this paper is given in 
\cite[(3.11)]{shimada2para}.
Let $(\xi,\eta)$ be a pair of adapted vector fields
(see Section \ref{sec:priwu}). 
Then
\begin{align}
\label{eq:invs}
W_{21}(f)
&\simeq
(f_\xi\cdot f_{\eta\eta})^3(f_\xi\cdot f_{\xi\xi})
-
(f_\xi\cdot f_{\eta\eta})^2
\Big((f_\xi\cdot f_\xi)(f_{\xi\xi}\cdot f_{\eta\eta})+
(f_{\xi}\cdot f_{\xi\eta})^2\Big)\\
&\hspace{5mm}
+
(f_\xi\cdot f_{\eta\eta})(f_\xi\cdot f_\xi)
\Big(2(f_{\xi}\cdot f_{\xi\eta})(f_{\xi\eta}\cdot f_{\eta\eta})
-(f_\xi\cdot f_{\xi\xi})(f_{\eta\eta}\cdot f_{\eta\eta})\Big)
\nonumber\\
&\hspace{5mm}
+
(f_\xi\cdot f_\xi)^2 
\Big((f_{\xi\xi}\cdot f_{\eta\eta})\, (f_{\eta\eta}\cdot f_{\eta\eta})
  - (f_{\xi\eta}\cdot f_{\eta\eta})^2\Big),
\nonumber\\
W_{31}(f)
&\simeq
2\det(f_\xi, f_{\xi\eta}, f_{\eta\eta})
\Big((f_\xi \cdot f_\xi)(f_{\xi\eta} \cdot f_{\eta\eta})
-(f_\xi \cdot f_{\xi\eta})(f_\xi \cdot f_{\eta\eta})\Big)\nonumber\\
&\hspace{5mm}
-(f_\xi \times f_{\eta\eta})\cdot(f_\xi \times f_{\eta\eta})
\det(f_\xi, f_{\xi\xi}, f_{\eta\eta}),\nonumber\\
W_{331}(f)
&\simeq
\det(f_{\xi},f_{\xi\eta},f_{\eta\eta}),\nonumber
\end{align}
where all functions are evaluated at $0$.
We remark that if $f$ satisfies $\rank df_0 = 1$, then
$f$  is a Whitney umbrella at $0$,
if and only if $W_{331}(f)\neq 0$.

\subsection{Singularities of the principal curvature surface}
We set $t=\tan\theta$ and assume $\cos\theta\ne0$.
	We assume that $f$ is written in the normal form \eqref{eq:wunormal}, and 
we set $h=(h_1,h_2,h_3)$.
	Then we have $h_{1\theta}(0,\theta)=0$.
The following lemma holds.
\begin{lemma}\label{eq:singh}
{\rm (1)} The map $h$ has a singular point at $(0,\theta)$
if and only if either
$(S1)$ or $(S2)$ holds, where
\begin{enumerate}
\item[$(S1)$] $h_{1r}\ne0$ and $h_{2\theta}=h_{3\theta}=0$,
\item[$(S2)$]
$
h_{1r}=0$ and $\D=0$, with
$$
\D=\det\pmt{
h_{2r},h_{3r}\\
h_{2\theta},h_{3\theta}}\Bigg|_{r=0}.
$$
\end{enumerate}
{\rm (2)} The condition\/ $(S1)$ is equivalent to\/ 
$f_{21}f_{331}^2+f_{31}^2=0$ and\/ $f_{31}+t f_{331}=0$. In
this case, $h$ is of\/ $S$-type.
If\/ $t\ne0$, then\/ $(S2)$ is equivalent to
\begin{equation}\label{eq:f31f31p}
\begin{array}{rl}
f_{31} &\displaystyle
= \frac{-2 t^2 f_{331} + 2 f_{21} f_{331} \pm \sqrt{4 t^2 + f_{331}^2}}{4 t}\Bigg|_{r=0},\\[4mm]
f_{31}' &\displaystyle
= \frac{-2 t^3 f_{321} - 2 t^4 f_{322} + f_{331} f_{21}' - 2 t^2 f_{331}'}{2 t}\Bigg|_{r=0}.
\end{array}
\end{equation}
In this case, $h$ is of\/ $S$-type if and only if\/
$X\ne0$, where
\begin{align*}
X&=(1+4(t^2+f_{21})^2)f_{331}(4t^2+f_{331}^2)^2\\
&+4\Bigl(
-72t^8f_{322}^2
-16t^7(6f_{321}f_{322}+f_{331}f_{322,v})\\
&\hspace{5mm}
-16t^6(2f_{321}^2+3f_{322}f'_{331}+f_{331}f_{322,u})
-16t^5(f_{331}f'_{321}+2f_{321}f'_{331})\\
&\hspace{10mm}
+4t^4(f_{331}^2-2(f'_{331})^2-2f_{331}f''_{331})
-8t^3f_{331}f''_{31}\\
&\hspace{15mm}
+t^2f_{331}^2(f_{331}^2+4(f_{21}+f''_{21}))
+f_{21}f_{331}^4
\Bigr)\sqrt{4 t^2 + f_{331}^2}\Bigg|_{r=0}.
\end{align*}
If\/ $t=0$, then\/ $(S2)$ is equivalent to\/
$f_{21}=\pm1/2$, $f_{21}'=0$.
In this case, $h$ is of\/ $S$-type if and only if\/
$-1-4 f_{31}^2\pm 4f_{21}''\ne0$.
\end{lemma}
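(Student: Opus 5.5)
Since $h_{1\theta}(0,\theta)=0$, the Jacobian of $h$ in $(r,\theta)$ at a point of $\E_0$ is
$$
dh|_{r=0}=\pmt{h_{1r}&0\\ h_{2r}&h_{2\theta}\\ h_{3r}&h_{3\theta}}.
$$
The first step is to show that this matrix has rank less than two exactly under (S1) or (S2), which gives part (1).

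If $h_{1r}\ne0$, rank drops if and only if the second column vanishes, i.e.\ $h_{2\theta}=h_{3\theta}=0$. If $h_{1r}=0$, the first row is zero, so rank drops if and only if the $2\times2$ minor $\D$ vanishes.

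For part (2), I will compute all needed quantities from the normal form \eqref{eq:wunormal} with $f_{ij}$ expanded to sufficiently high order. Concretely, I expand $f\circ\pi$, $\nu$ and $\kappa_1$ (via \eqref{eq:k1}) to first order in $r$, and express everything in $t=\tan\theta$.

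The $\theta$-derivatives at $r=0$ come from differentiating \eqref{eq:pricur}. Since \eqref{eq:pricur} is the pedal curve, $(h_{2\theta},h_{3\theta})=0$ means the pedal curve of the parabola is singular at parameter $b=t$. I will compute $\frac{d}{db}$ of \eqref{eq:pedcurp}. Factoring out $-f_{21}f_{331}+2f_{31}b+f_{331}b^2$, vanishing of both components should reduce to that quadratic vanishing together with a discriminant-type condition. I expect this to yield $f_{21}f_{331}^2+f_{31}^2=0$ and $f_{31}+tf_{331}=0$ (a double root at $b=t$).

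The quantities $h_{1r},h_{2r},h_{3r}$ need the first-order $r$-terms. These are where $f_{21}'$, $f_{31}'$, $f_{321}$, $f_{322}$ and $f_{331}'$ enter. I will find $h_{1r}$ explicitly, and solving $h_{1r}=0$ for $f_{31}$ should give the first line of \eqref{eq:f31f31p}, with the $\pm\sqrt{4t^2+f_{331}^2}$ coming from $\delta\cos\theta$. Substituting this into $\D=0$ and solving linearly for $f_{31}'$ gives the second line; this is where $t\neq0$ is needed to divide. For $t=0$ I redo the substitution directly and obtain $f_{21}=\pm1/2$, $f_{21}'=0$.

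For the $S$-type claims I use the criterion of Section~\ref{sec:priwu}: $h$ is of $S$-type when $h_\xi\times h_{\eta\eta}\ne0$ for an adapted pair $(\xi,\eta)$.
\begin{itemize}
\item In case (S1) take $\xi=\partial_r$, $\eta=\partial_\theta$. Show $h_{\theta\theta}$ has a nonzero $(2,3)$-component, using nondegeneracy of the double root and $f_{331}\ne0$. Then $h_r\times h_{\theta\theta}\ne0$ follows since $h_{1r}\ne0$.
\item In case (S2) the kernel is spanned by $\eta=h_{2\theta}\partial_r-h_{2r}\partial_\theta$ (or the analogous expression with index 3). I compute $h_{\eta\eta}$, which requires second-order $r$-expansions and hence $f''_{21}$, $f''_{31}$, $f''_{331}$, $f_{322,u}$, $f_{322,v}$. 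The condition $h_\xi\times h_{\eta\eta}\neq0$ then reduces, after substituting \eqref{eq:f31f31p}, to $X\ne0$, and at $t=0$ to $-1-4f_{31}^2\pm4f_{21}''\ne0$.
\end{itemize}

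This last computation is the main obstacle. It is a heavy symbolic elimination, which I would carry out in Mathematica, simplifying up to $\simeq$ by discarding nonvanishing factors such as powers of $\delta$ and $\cos\theta$.
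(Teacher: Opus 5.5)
Your plan is correct. For part (1), for $(S2)$ and for the $S$-type verifications it is the same direct computation as in the paper. Your adapted pair $\eta=h_{2\theta}\partial_r-h_{2r}\partial_\theta$ differs from the paper's, but changing the adapted pair only multiplies $h_\xi\times h_{\eta\eta}$ at the point by a nonzero factor, so you reach the same condition $X\neq0$.

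The genuinely different step is $(S1)$. The paper takes the resultant of $h_{2\theta},h_{3\theta}$ as polynomials in $t$, and then has to discard the spurious branch $f_{31}=4f_{21}+f_{331}^2=0$ by hand. Your pedal route avoids this, though not by factoring $Q(b)=-f_{21}f_{331}+2f_{31}b+f_{331}b^2$ out of $P'$ (it does not factor out). Instead:
\begin{itemize}
\item Write $P=g\,w$ with $g=2Q/(4b^2+f_{331}^2)$ and $w=(-f_{331},2b)$. Then $P'=g'w+g\,(0,2)$.
\item Hence $P'(b)=0$ iff $g(b)=g'(b)=0$ iff $Q(b)=Q'(b)=0$, i.e.\ $Z_1=0$ and $f_{31}+tf_{331}=0$.
\item At such a point $P''=g''w$ with $g''=4f_{331}/(4b^2+f_{331}^2)\neq0$. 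This gives $h_{\theta\theta}\simeq(0,f_{331}^2,2f_{31})$ with no further work.
\item The same argument shows $h_\theta\neq0$ whenever $Q(t)\neq0$. You need this, and do not state it, to know that $\rank dh=1$ in case $(S2)$; the paper checks it separately.
\end{itemize}

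One check is missing. $(S1)$ also requires $h_{1r}\neq0$, so for the converse you must verify that $Z_1=0$ and $t=-f_{31}/f_{331}$ force $h_{1r}\neq0$. This is immediate once $h_{1r}$ is computed: $h_{1r}$ is proportional to $4Q(t)^2-(4t^2+f_{331}^2)$ (this is exactly the polynomial in \eqref{eq:s2}). When $Q(t)=0$ it equals $-(4t^2+f_{331}^2)\neq0$.

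The same identity does two more things for you:
\begin{itemize}
\item It gives the first line of \eqref{eq:f31f31p}: $h_{1r}=0$ iff $2Q(t)=\pm\sqrt{4t^2+f_{331}^2}$. The square root comes from $\delta/\cos\theta$, not $\delta\cos\theta$.
\item It shows that the factor $\D_1\simeq Q(t)$ of $\D$ cannot vanish when $h_{1r}=0$. That is what legitimizes your linear solve of $\D=0$ for $f_{31}'$.
\end{itemize}
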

\begin{proof}
	Assertion {\rm (1)} follows immediately from the fact that
$h_{1\theta}(0,\theta)=0$.
We prove {\rm (2)}.
We first consider the case $(S1)$.
	The functions $h_{2\theta}$ and $h_{3\theta}$ are polynomials in $t$.
The resultant of them is
$$
-4 f_{331}^4 \,\bigl(f_{31}^2 + f_{21} f_{331}^2\bigr)\,
\Bigl(16 f_{31}^2 + (4 f_{21} + f_{331}^2)^2\Bigr)^2.
$$
Thus $h_{2\theta}=h_{3\theta}=0$ has a common root if and only if
$(S1$-$1)$: $f_{21}=-f_{31}^2/f_{331}^2$ or $(S1$-$2)$:
$f_{31}=4 f_{21} + f_{331}^2=0$.
We first consider the case $(S1$-$1)$.
We assume $f_{21}=-f_{31}^2/f_{331}^2$.
Under this condition, 
$h_{1r}=f_{331}/\sqrt{f_{31}^2 + f_{331}^2}\ne0$
and
\begin{align*}
h_{\theta}&=
\Bigg(
0,
\dfrac{4(1+t^2)(f_{31}+t f_{331})(4t f_{31}-f_{331}^3)}
     {(4t^2+f_{331}^2)^2},\\
&\hspace{10mm}
\dfrac{4(1+t^2)(f_{31}+t f_{331})(-4t^2 f_{31}+4t^3 f_{331}+f_{31}f_{331}^2+3t f_{331}^3)}{f_{331}(4t^2+f_{331}^2)^2}
\Bigg)\\
&\simeq
(f_{31}+t f_{331})\Big(0,f_{331}(4t f_{31}-f_{331}^3),
(-4t^2 f_{31}+4t^3 f_{331}+f_{31}f_{331}^2+3t f_{331}^3)
\Big)
\end{align*}
hold on $\{r=0\}$. 
Thus $h_{2\theta}=0$ if and only if 
$f_{31}+t f_{331}=0$ or $4t f_{31}-f_{331}^3=0$.
Assume $f_{31}+t f_{331}\ne0$ and $4t f_{31}-f_{331}^3=0$.
If $t=0$, then this is impossible.
Substituting $f_{31}=f_{331}^3/(4t)$ into 
$-4t^2 f_{31}+4t^3 f_{331}+f_{31}f_{331}^2+3t f_{331}^3$,
we obtain 
$f_{331}(4t^2+f_{331}^2)^2/(4t)$, which is non-zero.
Hence $h_{\theta}=0$ if and only if $f_{31}+t f_{331}=0$.
This shows the first assertion concerning $(S1)$.
Substituting $t=-f_{31}/f_{331}$ and $f_{21}=-f_{31}^2/f_{331}^2$
into $h_{\theta\theta}$, we obtain
\begin{equation}\label{eq:hoo}
h_{\theta\theta}=
\left(0,-\dfrac{4(f_{31}^2+f_{331}^2)^2}{4f_{31}^2+f_{331}^4},
-\frac{8f_{31}(f_{31}^2+f_{331}^2)^2}{f_{331}^2(4f_{31}^2+f_{331}^4)}
\right)
\simeq
\left(0,f_{331}^2,2f_{31}\right)
\end{equation}
holds on $\{r=0\}$.
Thus $h$ is of $S$-type.

We next show the case $(S1$-$2)$ does not occur.
Substituting $f_{31}=0$ and $f_{21}=-f_{331}^2/4$ into $h_\theta$,
we obtain
$$h_\theta \simeq (0, 0, f_{331}).$$
So, this case  cannot occur.

We now consider the case $(S2)$.
We see
\begin{align}\label{eq:s2}
h_{1r}=&
  4 f_{331}^{2} t^{4}
+ 16 f_{31} f_{331} t^{3}
+ 4(-1 + 4 f_{31}^{2} - 2 f_{21} f_{331}^{2}) t^{2}
\\
&\hspace{10mm}
- 16 f_{21} f_{31} f_{331} t
- f_{331}^{2}
+ 4 f_{21}^{2} f_{331}^{2},\nonumber\\
\D=&\D_1\D_2,\nonumber\\
\D_1\simeq &
t^2 f_{331} + 2 t f_{31} - f_{21} f_{331},\nonumber\\
\D_2\simeq &2 t^4 f_{322}+ 2 t^3 f_{321}  + 2 t^2 f_{331}'
+ 2 t f_{31}'- f_{331} f_{21}'\nonumber
\end{align}
on $\{r=0\}$.
The resultant of $h_{1r}$ and 
$\D_1$ 
with respect to $f_{21}$
is $4 t^2 f_{331}^2 + f_{331}^4$.
Thus they do not vanish simultaneously.
When $t\ne0$, the equations $h_{1r}=
\D_2=0$ form a quadratic system in the variables $f_{31}$ and $f_{31}'$.
Its solution is given by  \eqref{eq:f31f31p}.
Assume that $t\ne0$ and $h_{1r}=
\D_2=0$. Under the condition \eqref{eq:f31f31p} and $h_{3\theta}=0$,
we see
$h_{2\theta}=(4t^2+f_{331}^2)^{3/2}/(2f_{331})$.
Thus $\rank dh_0\geq1$.
We set $\xi=\partial_\theta$ and 
$\eta=
\sqrt{1+t^2}(1+t^2) f_{331}\partial_r
+t\left(2t f_{321} + 3t^2 f_{322} + f'_{331}\right)\partial_\theta$.
Then $(\xi,\eta)$ is an adapted pair,
A direct calculation shows that
$$
h_\xi\times h_{\eta\eta}=(X,0,0).
$$
	Hence $h$ is of $S$-type if and only if $X\ne0$.

Finally, assume that $t=0$. Then $h_{1r}=
\D_2=0$ is equivalent to $f_{21}=\pm1/2$ and $f_{21}'=0$.
Under these conditions,
$h_{3\theta}\ne0$ and 
$h_\theta\times h_{rr}=(-1-4 f_{31}^2\pm 4f_{21}'',0,0)\ne0$.
Thus the assertion holds.
\end{proof}

The invariants $W_{21}(h)$, $W_{31}(h)$,
and $W_{331}(h)$ can be computed whenever $h$ is of $S$-type at $(0,\theta)$.
Here we only give these invariants
for the case $(S1)$.
The other cases can be obtained in the same way 
by choosing appropriate adapted pair of vector fields.
\begin{proposition}\label{prop:invh}
Let\/ $h=h_{\kappa_1}$ be the principal curvature surface
of a Whitney umbrella\/ $f$ with 
respect to the bounded principal curvature\/ $\kappa_1$.
Let\/ $p=(0,\theta)\in \E_0$ be a singular point of $h$ such that\/ $h$ is of\/ $S$-type,
and assume that $p$ satisfies the condition\/ $(S1)$ in Lemma\/ {\rm \ref{eq:singh}}.
Then the invariants\/ $W_{21}(h)$, $W_{31}(h)$,
and\/ $W_{331}(h)$ at\/ $p$ satisfy
\begin{align}
&W_{21}(h)\\
\simeq\,&
144 f_{31}^{10} f_{322}^{2}
-192 f_{31}^{9} f_{321} f_{322} f_{331}
+32 f_{31}^{8}\left(2 f_{321}^{2}
+3 f_{322} f_{331}'
+2 f_{31} (f_{322})_{v}\right) f_{331}^{2}\nonumber\\
&
-64 f_{31}^{7}\left(f_{321} f_{331}'
+f_{31} (f_{322})_{u}\right) f_{331}^{3}
+16 f_{31}^{6}\left(9 f_{31}^{2} f_{322}^{2}
+4 f_{31} f_{321}'
+f_{331}'^{2}\right) f_{331}^{4}\nonumber\\
&
-8 f_{31}^{6}\left(27 f_{31} f_{321} f_{322}
-18 f_{322} f_{21}'
+4 f_{331}''\right) f_{331}^{5}\nonumber\\
&
+16 f_{31}^{5}\left(
-6 f_{321} f_{21}'
+f_{31}\left(5 f_{321}^{2}
+9 f_{322} f_{331}'\right)
+2 f_{31}''
+2 f_{31}^{2} (f_{322})_{v}\right) f_{331}^{6}\nonumber\\
&
-8 f_{31}^{4}\left(
-6 f_{21}' f_{331}'
+f_{31}\left(9 f_{322} f_{31}'
+13 f_{321} f_{331}'\right)
+4 f_{31}^{2} (f_{322})_{u}\right) f_{331}^{7}\nonumber\\
&
+2 f_{31}^{4}\left(
15 f_{31}^{2} f_{322}^{2}
+24 f_{321} f_{31}'
+16 f_{31} f_{321}'
+16 f_{331}'^{2}
+8 f_{21}''\right) f_{331}^{8}\nonumber\\
&
-8 f_{31}^{3}\left(
6 f_{31}^{2} f_{321} f_{322}
+3 f_{31}' f_{331}'
+f_{31}\left(-6 f_{322} f_{21}'
+2 f_{331}''\right)\right) f_{331}^{9}\nonumber\\
&
+f_{31}^{2}\left(
12 f_{21}'^{2}
+f_{31}^{2}\left(19 f_{321}^{2}
+36 f_{322} f_{331}'\right)
-4 f_{31}\left(9 f_{321} f_{21}'
-4 f_{31}''\right)
+4 f_{31}^{3} (f_{322})_{v}\right) f_{331}^{10}\nonumber\\
&
-4 f_{31}^{2}\left(
-6 f_{21}' f_{331}'
+f_{31}\left(6 f_{322} f_{31}'
+7 f_{321} f_{331}'\right)
+f_{31}^{2} (f_{322})_{u}\right) f_{331}^{11}\nonumber\\
&
+2 f_{31}\left(
-6 f_{21}' f_{31}'
+2 f_{31}^{2} f_{321}'
+f_{31}\left(9 f_{321} f_{31}'
+5 f_{331}'^{2}
+4 f_{21}''\right)\right) f_{331}^{12}\nonumber\\
&
-2 f_{31}\left(
6 f_{31}' f_{331}'
+f_{31} f_{331}''\right) f_{331}^{13}
+\left(3 f_{31}'^{2}
+2 f_{31} f_{31}''\right) f_{331}^{14}
+f_{21}'' f_{331}^{16},\nonumber
\end{align}
\begin{align}\label{eq:w31h}
W_{31}(h)\simeq\,&\omega
\Big(-12 f_{31}^{5} f_{322}
+8 f_{31}^{4} f_{321} f_{331}
-4 f_{31}^{3} f_{331}' f_{331}^{2}
-4 f_{31}^{3} f_{322} f_{331}^{4}\\
&\hspace{20mm}
+f_{31}\left(3 f_{31} f_{321} - 2 f_{21}'\right) f_{331}^{5}
-2 f_{31} f_{331}' f_{331}^{6}
+f_{31}' f_{331}^{7}\Big)\nonumber
\end{align}
and
\begin{equation}
W_{331}(h)\label{eq:w331h}
\simeq\,\omega,
\end{equation}
where\/ $\omega=2 f_{31}^{4} f_{322}
-2 f_{31}^{3} f_{321} f_{331}
+2 f_{31}^{2} f_{331}' f_{331}^{2}
-2 f_{31} f_{31}' f_{331}^{3}
- f_{21}' f_{331}^{5}$.
\end{proposition}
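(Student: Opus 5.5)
The plan is to apply the invariant formulas \eqref{eq:invs} directly to $h$ at $p=(0,\theta)$, using an adapted pair built from the data in the proof of Lemma~\ref{eq:singh}. By that lemma, under $(S1)$ we have $f_{21}=-f_{31}^2/f_{331}^2$ and $t=\tan\theta=-f_{31}/f_{331}$. We also have $h_{1r}\ne0$ and $h_\theta=0$ at $p$. So $\ker dh_p=\langle\partial_\theta\rangle$, and we may take $\eta=\partial_\theta$ and $\xi=\partial_r$. To avoid square roots in $\cos\theta$ and $\sin\theta$, I would work with $t$ as the angular coordinate, i.e.\ use $\partial_t$ in place of $\partial_\theta$. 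The two differ by the nonvanishing factor $1+t^2$ and by lower-order terms. Since every formula in \eqref{eq:invs} is only claimed up to $\simeq$, such rescalings of the adapted pair are harmless. For the same reason I would fix the representative with $\cos\theta>0$, as in \eqref{eq:k1}.

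The first step is to obtain the jets of $h$ needed in \eqref{eq:invs}: $h_\xi$, $h_{\xi\xi}$, $h_{\xi\eta}$, $h_{\eta\eta}$ at $p$. These require the expansion of $h=f+\kappa_1\nu$ in $r$ up to order $2$ and its $\theta$-derivatives up to order $2$ along $r=0$. For this I would expand $E,F,G,L,M,N$ in $r$ from the normal form \eqref{eq:wunormal}, with $\nu\simeq f_u\times\phi$ normalized. I would then expand $\kappa_1$ through \eqref{eq:k1}, writing $\sqrt{\tilde D+D_1/\cos^2\theta}$ as a series in $r$. This is where the derivatives $f_{21}',f_{31}',f_{331}',f_{321}$, $f_{322}$ enter at first order in $r$. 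At second order the terms $f_{21}'',f_{31}'',f_{331}'',f_{321}'$, $(f_{322})_u$, $(f_{322})_v$ appear, and these are exactly the quantities visible in $W_{21}(h)$. We already know $h_{\eta\eta}\simeq(0,f_{331}^2,2f_{31})$ from \eqref{eq:hoo}, and $h_\xi=(h_{1r},h_{2r},h_{3r})$ with $h_{1r}=f_{331}/\sqrt{f_{31}^2+f_{331}^2}$.

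Next, substitute into \eqref{eq:invs}. The simplest target is $W_{331}(h)\simeq\det(h_\xi,h_{\xi\eta},h_{\eta\eta})$. Only $h_{r\theta}$ is new here, and it depends on the first-order $r$-terms, consistent with $\omega$ being linear in $f_{322},f_{321},f_{331}',f_{31}',f_{21}'$. After substituting $t=-f_{31}/f_{331}$ and $f_{21}=-f_{31}^2/f_{331}^2$, and clearing positive factors such as powers of $4f_{31}^2+f_{331}^4$, $f_{31}^2+f_{331}^2$ and $f_{331}$, the result should reduce to $\omega$. For $W_{31}(h)$, the determinant $\det(h_\xi,h_{\xi\eta},h_{\eta\eta})$ reappears as the factor $\omega$. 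The remaining terms involve $\det(h_\xi,h_{\xi\xi},h_{\eta\eta})$, and I expect those pieces to combine into the stated bracket. For $W_{21}(h)$ we need the full second-order data, including $h_{\xi\xi}=h_{rr}$, and the result is the degree-$16$ polynomial displayed. Throughout, $\simeq$ lets us discard nonvanishing prefactors, but common sign and scale must be tracked consistently within each expression.

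The main obstacle is the size of the symbolic computation. The $r^2$-coefficient of $\kappa_1$ from \eqref{eq:k1} is a very large expression. Obtaining the compact forms requires aggressive simplification after imposing the $(S1)$ relations, and the factorizations, such as extracting $\omega$ from $W_{31}(h)$, must be found explicitly. I would carry this out in Mathematica, as the paper does. I would substitute the $(S1)$ relations as early as possible, at the level of the jets of $h$ rather than of the final invariants. Finally, I would verify the factorization by checking that the quotient of the computed $W_{31}(h)$ by $\omega$ is polynomial, and that the leftover factors are nonvanishing on $\E_0$.
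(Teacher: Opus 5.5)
Your proposal is correct and is essentially the paper's proof: under $(S1)$ the pair $(\partial_r,\partial_\theta)$ is adapted, and the three invariants are obtained by substituting $h_r,h_{rr},h_{r\theta},h_{\theta\theta}$ at $p$ into \eqref{eq:invs} and simplifying by computer. Using $\partial_t$ instead is harmless, since it only rescales $\eta$ by $1+t^2$ and $h_\theta(p)=0$. Two cautions. First, $W_{31}(h)$ and $W_{21}(h)$ are not homogeneous in $h_{\eta\eta}$ alone, so you must use the actual $h_{\theta\theta}$ from \eqref{eq:hoo} (or rescale the whole pair), not its proportional form. Second, the factor $\omega$ in $W_{31}(h)$ also needs $\omega$ to divide $\det(h_r,h_{rr},h_{\theta\theta})$, which your final divisibility check will confirm.
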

\begin{proof}
Since we assume $(S1)$, the pair $(\partial_r,\partial_\theta)$ is 
adapted.
By a direct calculation, we obtain the assertion.
\end{proof}
As a consequence of Lemma \ref{eq:singh} and 
Proposition \ref{prop:invh}, we obtain
the following corollary.

\begin{corollary}
In the setting of Proposition\/ {\rm \ref{prop:invh}},
that is, under the condition $(S1)$ in Lemma {\rm \ref{eq:singh}},
$W_{331}(h)=0$ implies\/
$W_{31}(h)=0$.
\end{corollary}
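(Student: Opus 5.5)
The corollary is read off directly from the formulas in Proposition~\ref{prop:invh}. By \eqref{eq:w331h}, $W_{331}(h)\simeq\omega$, and since $\simeq$ means equality up to a nowhere-vanishing factor, $W_{331}(h)=0$ is equivalent to $\omega=0$. By \eqref{eq:w31h}, $W_{31}(h)$ is proportional to $\omega$ times a polynomial $Q$ in $f_{31},f_{331},f_{321},f_{322},f_{331}',f_{21}',f_{31}'$. So $\omega=0$ forces $W_{31}(h)\simeq \omega\cdot Q=0$. Hence the argument has three steps. First, recall that under $(S1)$ the pair $(\partial_r,\partial_\theta)$ is adapted and $h$ is of $S$-type, so both invariants are defined at $p$. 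Second, apply \eqref{eq:w331h}. Third, substitute $\omega=0$ into \eqref{eq:w31h}.

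The only real point to check is that the proportionality factors in \eqref{eq:w31h} and \eqref{eq:w331h} are regular at $p$. They must have no poles there, so that $\omega=0$ genuinely gives the product $\omega Q$ equal to zero rather than an indeterminate form. I would verify this from the explicit formulas \eqref{eq:invs}, evaluated with the adapted pair $(\partial_r,\partial_\theta)$. The denominators arising in $h$ and its derivatives on $\{r=0\}$ are powers of $\delta$, of $4t^2+f_{331}^2$, and of $f_{331}$ (compare \eqref{eq:hoo}). Under $(S1)$ we have $t=-f_{31}/f_{331}$ and $f_{21}=-f_{31}^2/f_{331}^2$, and these factors become powers of $4f_{31}^2+f_{331}^4$, of $f_{31}^2+f_{331}^2$, and of $f_{331}$. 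All of these are nonzero because $f_{331}(0)>0$ for a Whitney umbrella.

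The expected obstacle is therefore only bookkeeping. The statement relies on the displayed factorization of $W_{31}(h)$ having $\omega$ as an honest polynomial factor, as given by the Mathematica computation in Proposition~\ref{prop:invh}, with no hidden cancellation against a denominator. I would confirm this by evaluating the determinant $\det(h_\xi,h_{\xi\xi},h_{\eta\eta})$ and the term $\det(h_\xi,h_{\xi\eta},h_{\eta\eta})$ appearing in the $W_{31}$ formula. The aim is to see that $\det(h_\xi,h_{\xi\eta},h_{\eta\eta})\simeq\omega$ appears in the first term. The second term, $-(h_\xi\times h_{\eta\eta})\cdot(h_\xi\times h_{\eta\eta})\det(h_\xi,h_{\xi\xi},h_{\eta\eta})$, must then also carry a factor $\omega$. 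This is plausible because $h_\xi=h_r$, $h_{\xi\xi}$ and $h_{\eta\eta}=h_{\theta\theta}$ span a plane containing $h_{r\theta}$ modulo the $S$-type structure when $\omega=0$. Once that factorization is granted, the corollary follows immediately.
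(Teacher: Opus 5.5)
Your proposal is correct and matches the paper's primary argument: the corollary is read off from $W_{331}(h)\simeq\omega$ and the explicit factor $\omega$ in \eqref{eq:w31h}. Since $\simeq$ means equality up to a nowhere-vanishing function, your extra pole check is already built into the notation. The heuristic in your last paragraph is made precise in the paper's alternative proof. There, the functional $x\mapsto 2f_{31}x_2-f_{331}^2x_3$ annihilates $h_r$ and $h_{\theta\theta}$ on $\{r=0\}$ and sends $h_{rr}$ and $h_{r\theta}$ to $X_1\omega$ and $X_2\omega$. Hence $\omega$ divides both $\det(h_r,h_{rr},h_{\theta\theta})$ and $\det(h_r,h_{r\theta},h_{\theta\theta})$.
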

This follows  directly from
\eqref{eq:w31h} and \eqref{eq:w331h}.
We also provide an alternative proof.
\begin{proof}
If $p=(0,\theta)$ satisfies the condition $(S1)$
of Lemma \ref{eq:singh}, then by \eqref{eq:hoo}
it holds that $h_{\theta\theta}|_{r=0}\simeq(0,f_{331}^2,2f_{31})$.
This implies that 
$2 f_{31}h_{2\theta\theta}-f_{331}^2h_{3\theta\theta}=0$
on $\{r=0\}$.
Moreover, by a direct calculation,
we see $2 f_{31}h_{2r}-f_{331}^2h_{3r}=0$ on $\{r=0\}$,
under the condition $(S1)$.
Furthermore, 
by a calculation, we see
\begin{equation}
\label{eq:hrrhrohoo}
\begin{array}{rcl}
2 f_{31}h_{2rr}-f_{331}^2h_{3rr}
&=&
X_1\omega
,\\[2mm]
2 f_{31}h_{2r\theta}-f_{331}^2h_{3r\theta}
&=&
X_2\omega
\end{array}
\end{equation}
on $\{r=0\}$, where,
\begin{align*}
X_1
=&
-\frac{
24 f_{31}
\left(3 f_{31}^2 f_{322} - 2 f_{31} f_{321} f_{331} + f_{331}^2 f_{331}'\right)
}{
f_{331}^2 \left(f_{31}^2 + f_{331}^2\right)\left(4 f_{31}^2 + f_{331}^4\right)
},\\
X_2
=&
-\frac{
12 \sqrt{f_{31}^2 + f_{331}^2}
}{
4 f_{31}^2 f_{331} + f_{331}^5
}.
\end{align*}
We set
$$C_1=(h_r,h_{rr},h_{\theta\theta})
\quad\text{and}\quad
C_2=(h_r,h_{r\theta},h_{\theta\theta}).$$
Then \eqref{eq:hrrhrohoo} implies that
by considering the row operation that subtracts 
$f_{331}^2$ times the third row from 
$2 f_{31}$ times the second row of 
$C_1$ and $C_2$,
we see that $C_1$ and $C_2$ are modified into
$$
\pmt{
h_{1r}&h_{1rr}   &h_{1\theta\theta}\\
h_{2r}&h_{2rr}   &h_{2\theta\theta}\\
0    &X_1\omega&0}
\quad\text{and}\quad
\pmt{
h_{1r}&h_{1r\theta}   &h_{1\theta\theta}\\
h_{2r}&h_{2r\theta}   &h_{2\theta\theta}\\
0    &X_2\omega     &0}
$$
respectively.
Thus $\omega$ 
divides both
$\det C_1$
and
$\det C_2$. On the other hand,
$W_{31}(h)$ is a linear combination of $\det C_1$ and $\det C_2$,
and $W_{331}(h)$ equals $\det C_2$.
Thus $\omega$ 
divides both $W_{31}(h)$ and $W_{331}(h)$.
This implies the assertion. 
\end{proof}
It should be remarked that as one can see from \eqref{eq:wunormal},
for a general $S$-type germ $g$, 
the conditions $W_{31}(g)=0$ and $W_{331}(g)=0$ 
are distinct.
In fact, if $g=(u,v^2,au^2+buv)$, then
$W_{31}(g)=a$ and $W_{331}(g)=b$.


\section{Example}\label{sec:ex}
Set $ f=(u, u^3 + v^2, u^2 + uv + v^3). $
Then  $\rank df_0=1$. Moreover,
$ f_{vv}(0,0)=(0,2,0)$ and $f_{uv}(0,0)=(0,0,1).$
Hence
$$ \det\bigl(f_u(0,0),f_{uv}(0,0),f_{vv}(0,0)\bigr)=-2\neq0.$$
Therefore $f$ is a Whitney umbrella. Then we have
$\nu_2(r,\theta)=f_u\times f_v(r\cos\theta,r\sin\theta)/r$,
where
$$
\nu_2=
(3 r^2 \cos^3\theta-4 r \cos\theta\sin\theta-2 r \sin^2\theta+9 r^3 \cos^2\theta\sin^2\theta,-\cos\theta-3 r \sin^2\theta,2 \sin\theta).
$$
We set $\delta=|\nu_2|$ and 
$
\nu=\nu_2/\delta$. Using the notation of Section \ref{sec:cpara}
and the proof of Lemma \ref{lem:k1cinf}, we obtain
\begin{align*}
E&=
1 + 9 r^4 \cos^4\theta + (2 r \cos\theta + r \sin\theta)^2,\\
F&=r^2\left(6 r \cos^2\theta \sin\theta 
+ (2 \cos\theta + \sin\theta)(\cos\theta + 3 r \sin^2\theta)\right),\\
G&=r^2\left(4 \sin^2\theta + (\cos\theta + 3 r \sin^2\theta)^2\right)
\end{align*}
and
$$
\secondmat=
\pmt{
  \dfrac{-6 r - 9 r^2 \cos\theta - 6 r \cos 2\theta + 9 r^2 \cos 3\theta + 8 \sin\theta}{2\delta} 
  & \dfrac{2 \sin \theta}{\delta} \\[8pt]
  \dfrac{2 \sin \theta}{\delta} 
  & -\dfrac{2 (\cos \theta-3r\sin^2\theta)}{\delta}
}.
$$
As we saw in Section \ref{sec:priwu}, the bounded principal curvature is
$$
\kappa_1=\dfrac{-2(LN-M^2)}{-(EN-2FM+GL)+\cos\theta
\sqrt{\dfrac{D}{\cos^2\theta}}},
$$
with $D/\cos^2\theta|_{r=0}=4/(\cos^2\theta+4\sin^2\theta)$
and 
$-(EN-2FM+GL)|_{r=0}=2\cos\theta/(\cos^2\theta+4 \sin^2\theta)^{1/2}$.
Thus  $\kappa_1$ is a 
$C^\infty$ function, and so is $h=f+\kappa_1\nu$.
See Figure \ref{fig:figprisurf} for the image of $h$.
The thick curve represents the image of the exceptional set $h(\E_0)$.
\begin{figure}[htbp]
\centering
\includegraphics[width=0.4\linewidth]{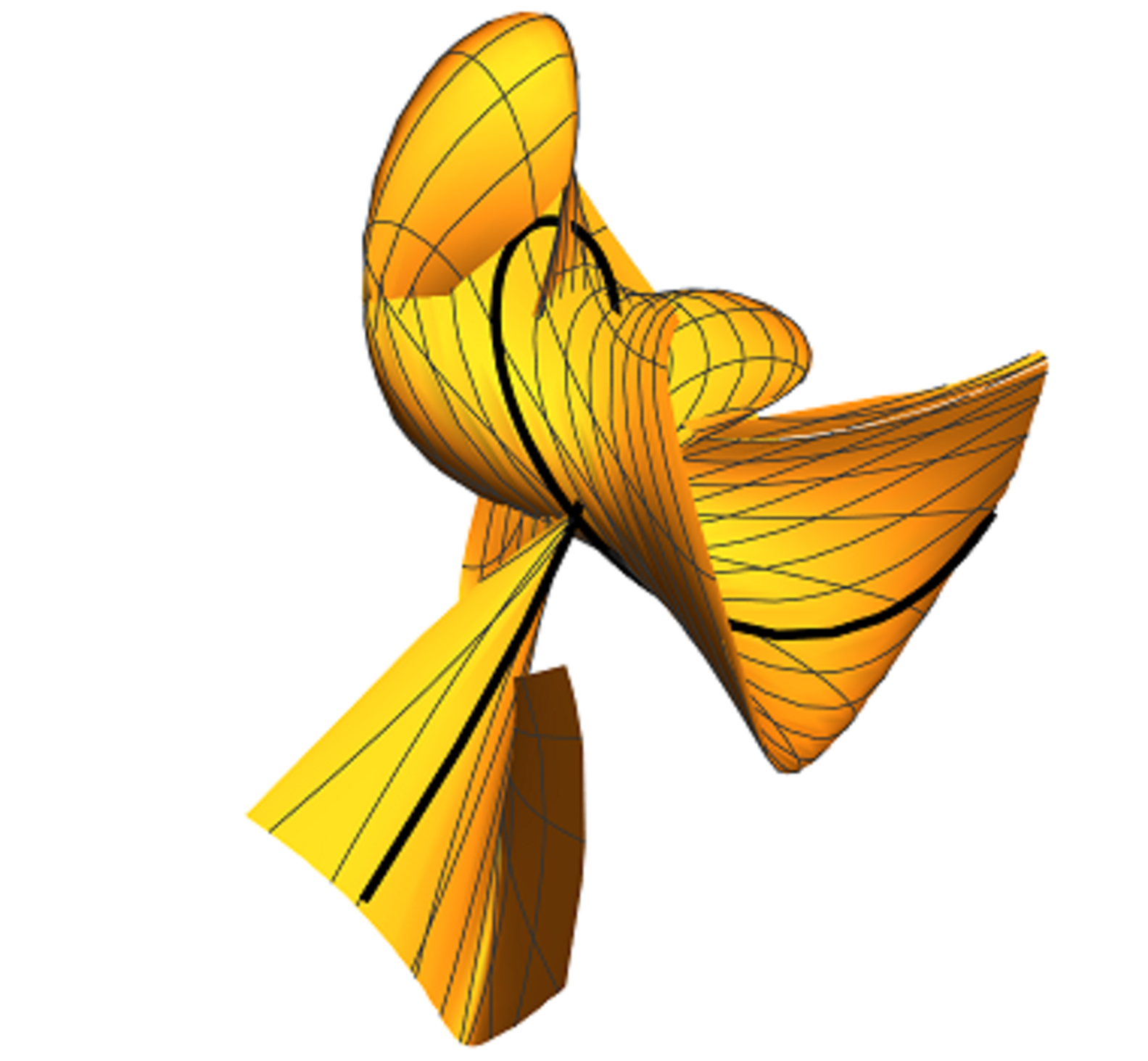}
\caption{Principal curvature surface $h$ of $f$}
\label{fig:figprisurf}
\end{figure}
Figure \ref{fig:figcpandpedcp} left shows
the curvature parabola of $f$ (dashed line)
and its pedal curve with respect to the origin (solid line). 
As shown in Theorem \ref{thm:pricipalpedal}, the latter coincides with 
$h(\E_0)$.
\begin{figure}[htbp]
\centering
\includegraphics[width=0.3\linewidth]{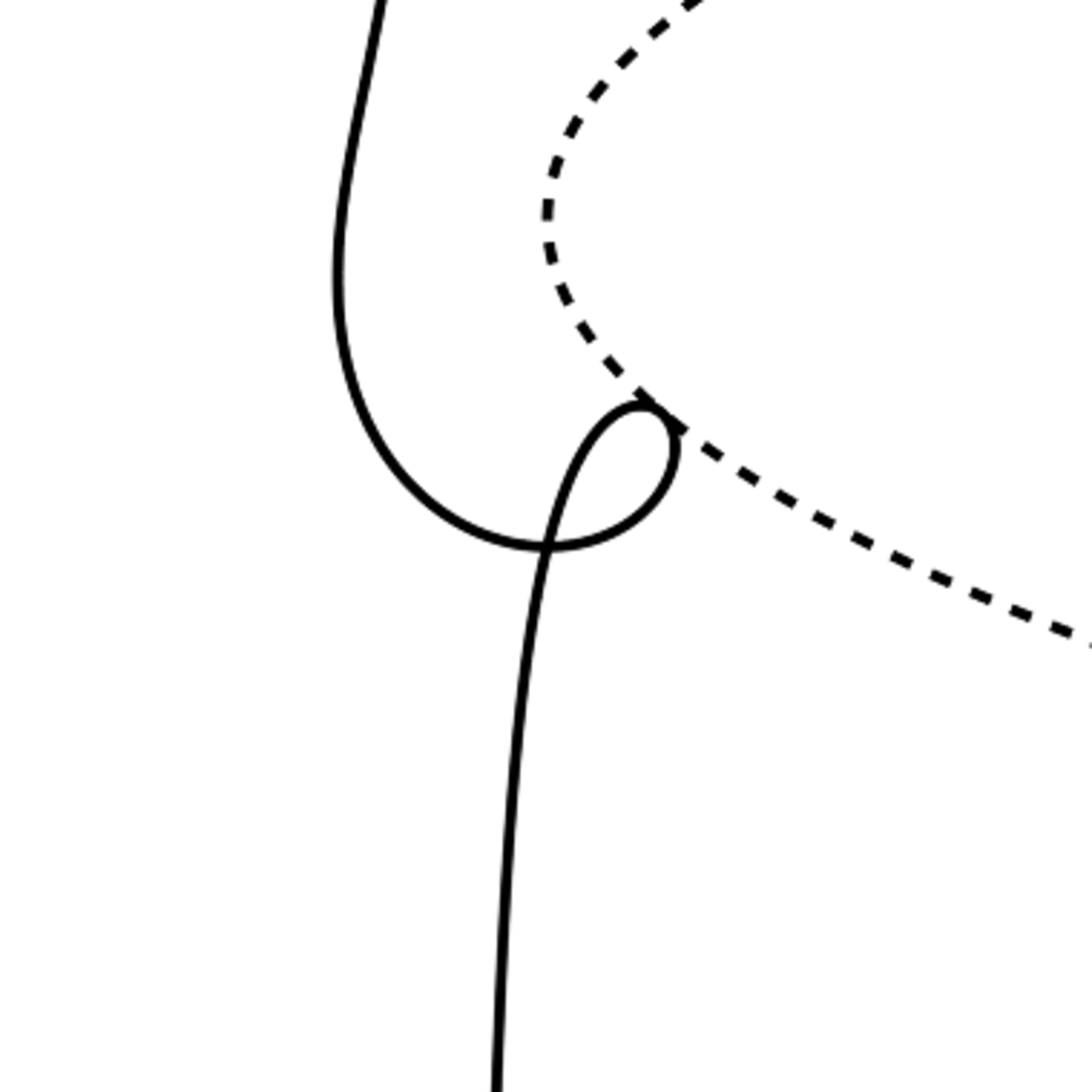}
\hspace{5mm}
\includegraphics[width=0.3\linewidth]{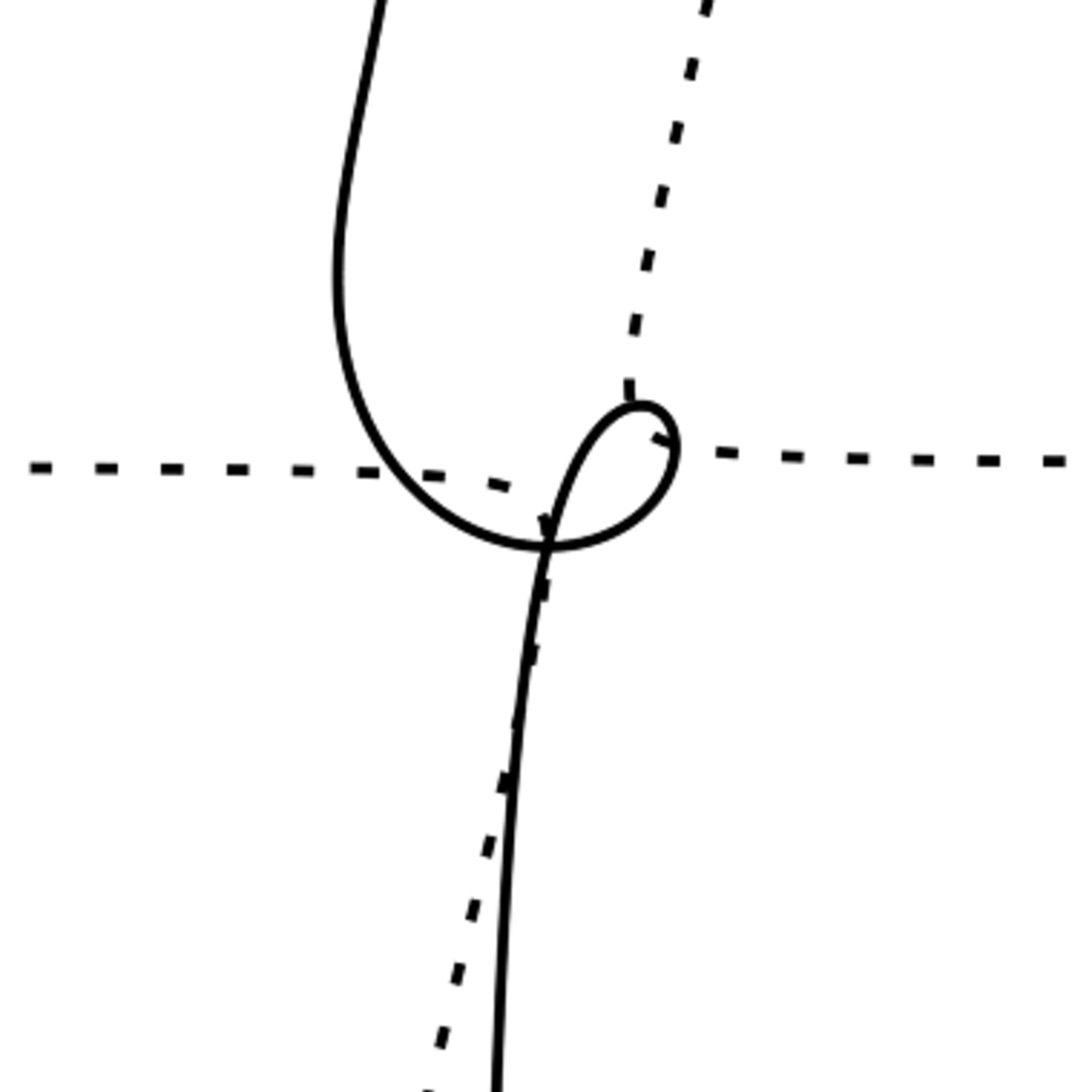}
\caption{Curvature parabola, 
pedal of curvature parabola and focal conic of $f$}
\label{fig:figcpandpedcp}
\end{figure}
Figure \ref{fig:figcpandpedcp} right shows the pedal curve  of 
the curvature parabola with respect to the origin (solid line)
and the focal conic of $f$ (dotted line). 
As proved in Theorem \ref{cor:cocalinv}, 
the inversion of the former coincides with the latter.
This surface $h$ satisfies $S(h)\cap \E_0=\emptyset$.
In particular, the curves $\theta\mapsto h(0,\theta)$ and
$r\mapsto h(r,\theta)$ for any fixed $\theta$ have no singular
points at $r=0$.

Setting $t=\tan\theta$, we see that 
the Gaussian and mean curvatures $K$ and $H$ of $h$, respectively, 
satisfy the following:
\begin{align*}
K|_{r=0}\simeq\,&14+22 t+255 t^2-436 t^3+13777 t^4+63678 t^5+107855 t^6
+212040 t^7\\
&\hspace{5mm}+261408 t^8-261360 t^9
-1101292 t^{10}-1336608 t^{11}+77888 t^{12}\\
&\hspace{5mm}+3536320 t^{13}+6374880 t^{14}+6056192 t^{15}+4287872 t^{16}+2984832 t^{17}\\
&\hspace{5mm}+1758080 t^{18}+625152 t^{19}+102912 t^{20}+4096 t^{21},\\
H|_{r=0}\simeq\,&-12+34 t-1183 t^2-3904 t^3+8244 t^4+16088 t^5+43956 t^6+184544 t^7\\
&\hspace{5mm}+340872 t^8
+405248 t^9+362816 t^{10}+214080 t^{11}+135552 t^{12}\\
&\hspace{5mm}+151552 t^{13}+131200 t^{14}+99584 t^{15}
+61184 t^{16}+17408 t^{17}+1024 t^{18}.
\end{align*}
Moreover, 
the geodesic curvature $\kappa_g$ and the normal curvature
 $\kappa_n$ of the curve $h|_{\E_0}$
regarded as a curve on the surface $h$ 
satisfy the following:
\begin{align*}
\kappa_g&\simeq t(2+t)(-4-6 t-3 t^2+8 t^3)(-1+2 t^4),\\
\kappa_n&\simeq (-4-6 t-3 t^2+8 t^3)(-1+12 t^2+16 t^3+4 t^4).
\end{align*}
In contrast, for a fixed $\theta\in \E_0$,
consider the radial curve $r\mapsto h(r,\theta)$ as a curve on $h$.
Its geodesic and normal curvatures at $r=0$ 
are functions of $\theta$.
We denote them by $r_g(\theta)$ and $r_n(\theta)$, respectively.
Then we have
\begin{align*}
r_g&\simeq t\big(-138-191 t+451 t^2-148 t^3+4358 t^4+11837 t^5+28023 t^6+76684 t^7
\\
&\hspace{10mm}+135848 t^8+211032 t^9+285680 t^{10}+335168 t^{11}+365016 t^{12}+279984 t^{13}\\
&\hspace{10mm}
+142320 t^{14}+61248 t^{15}+15440 t^{16}+6272 t^{17}+2816 t^{18}\big),\\
r_n&\simeq -1-2 t+411 t^2+332 t^3+1881 t^4+4264 t^5+7144 t^6+16052 t^7
\\
&\hspace{10mm}+20458 t^8+15264 t^9+8456 t^{10}+3600 t^{11}+1400 t^{12}+448 t^{13}+128 t^{14}.
\end{align*}

Since proportionality does not affect the roots of a polynomial,
the discriminant of a polynomial vanishes if and only if
the discriminant of any proportional polynomial vanishes.
A direct computation shows that the discriminants of 
the above polynomials are all non-zero. 
Consequently, the discriminants of
$K|_{r=0}$, $H|_{r=0}$, $\kappa_g$, $\kappa_n$,
$r_g$ and $r_n$ regarded as polynomials in $t$, are non-zero,
and this implies that all the roots of these polynomials
are simple.

\section{Geometry of the exceptional set as a curve}

Let $f:(\R^2,0)\to(\R^3,0)$ be a Whitney umbrella.
We assume that $f$ is written in the normal form
\eqref{eq:wunormal} and consider the blow-up \eqref{eq:bl}.
Let $h=h_{\kappa_1}$ denote the principal curvature surface associated with
the bounded principal curvature $\kappa_1$.
Since $\E_0$ is a curve on $\M_0$ and $\nu(\E_0)$ is also a curve,
the image $h(\E_0)$ is a curve as well.
As we have shown, it coincides with the pedal curve of the curvature parabola.
The geometric behavior of $h(\E_0)$ as a curve on
the surface $h(\M_0)$ reflects
the corresponding geometry of the original Whitney umbrella.

In this section, we study the geometric properties of
$h(\E_0)$ related to its geodesic curvature and normal curvature,
as well as the Gaussian and mean curvatures of $h$ along $\E_0$.

\subsection{Space of Whitney umbrella up to four-jet}
Since the Whitney umbrella is two-determined,
and all the conditions and invariants 
we consider depend only on terms of degree at most four,
it suffices to restrict our attention to four-jets of  Whitney umbrellas.
We set 
\begin{align*}
	X&=J^2(1,1)\times J^2(1,1)\times J^1(1,1)\times J^1(2,1)\times J^2(1,1)_+\\
	&=
	\{j=(j^2f_{21}(0),j^2f_{31}(0),j^1f_{321}(0),j^1f_{322}(0,0),j^2f_{331}(0))\,|\,f_{331}(0)>0
	\},
\end{align*}
where $J^r(m,n)$ is the $r$-jet space at the origin of $\R^m$.
Since the normal form \eqref{eq:wunormal} is unique,
there is a one-to-one correspondence between $X$ and
$\{j^4 f(0) \in J^2(2,3) | f \text{ is a Whitney}$ $\text{umbrella} \}/\!\sim$, 
where $\sim$ denotes the equivalence relation given by
the natural action of diffeomorphisms of $(\R^2,0)$ and of $SO(3)$.
In fact, for a four-jet $j^4 f(0)$ of a Whitney umbrella, 
we express $f$ by the form \eqref{eq:wunormal}.
Then the functions 
$f_{21},f_{31},f_{321},f_{322}$ and $f_{331}$ appear.
Associating these functions with each element of $X$,
we may identify $X$ with the space of Whitney umbrella up to four-jet.
We regard $X$ as a subset of 
a Euclidean space of suitable dimension in the usual way.

Here we see the condition for singularities of $h$ at
the exceptional set.
Let $e(\theta)=(0,\theta)$  be a parameterization of the
exceptional set, and $\hat{e}:\E_0\to\R^3$ defined by
$\hat{e}(\theta)=h\circ e(\theta)=h(0,\theta)$ 
be a parameterization of the exceptional set on 
the principal curvature surface.
The condition $\hat{e}'\cdot \hat{e}'=0$ is equivalent to
$h_\theta\cdot h_\theta|_{r=0}=0$.
By Lemma \ref{eq:singh} $(S1)$,
if $f_{21}f_{331}^2+f_{31}^2\ne0$,
then $\hat{e}'\cdot \hat{e}'\ne0$ for any $\theta$.
We set 
\begin{equation}\label{eq:z1}
	Z_1=f_{21}f_{331}^2+f_{31}^2,
\end{equation}
where we omit the evaluating value $(0)$ or $(0,0)$ for
functions.
On the other hand, 
for a fixed $\theta$, a curve $\rho:r\mapsto (r,\theta)$ is called
a {\it radial curve with respect to $\theta$},
and we set $\hat\rho=h\circ\rho$.
Although the construction depends on the chosen blow-up, 
the normal form \eqref{eq:wunormal} is unique, and the blow-up
is natural, so the resulting curve has a certain geometric meaning.
We next determine  the condition under which $\hat\rho$ is singular.
Under the notation of Lemma \ref{eq:singh},
the resultants of the pairs 
$(h_{1r},h_{2r})$ and $(h_{1r},h_{3r})$,
viewed as polynomials in $t$,
are polynomials in
$f_{21}$, $f_{31}$, $f_{331}$, 
$f_{21}'$, $f_{31}'$, $f_{331}'$, $f_{321}$, $f_{322}$.
We denote them by $Z_2$ and $Z_3$ respectively.
We next see the condition $\E_0\cap S(h)=\emptyset$. 
Under the notation of Lemma \ref{eq:singh}, because of \eqref{eq:s2},
the condition  $(S2)$ is equivalent to
$(h_{1r},\D_1)=(0,0)$ or $(h_{1r},\D_2)=(0,0)$.
The resultant of the pair $(h_{1r},\D_1)$,
considered as polynomials in $t$,
is proportional to
\begin{equation}\label{eq:z2}
	Z_4=(4 f_{21} + f_{331}^{2})^2+16f_{31}^{2}.
\end{equation}
In the same manner, the resultant of the pair
$(h_{1r},\D_2)$ is
a polynomial in
$f_{21}$, $f_{31}$, $f_{331}$, 
$f_{21}'$, $f_{31}'$, $f_{331}'$, $f_{321}$, $f_{322}$.
We denote this by $Z_5$.
For a polynomial map $Z$ on $X$,
we denote by  $\ZZ(Z)$ the zero set of $Z$.
By the construction, we have the following lemma.
\begin{lemma}\label{lem:singe}
	Under the above notation, if\/ $f\in X\setminus\ZZ(Z_1)$,
	then the corresponding principal curvature surface\/ $h$ satisfies
	that the curve\/ $\hat{e}=h(0,\theta)$ is nonsingular for every\/ $\theta\in \E_0$.
	If\/ $f\in X\setminus(\ZZ(Z_2)\cap \ZZ(Z_3))$,
	then the corresponding principal curvature surface\/ $h$ satisfies
	that the radial curve\/ $\hat{\rho}:r\mapsto h(r,\theta)$ 
	is nonsingular at $r=0$  for every\/ $\theta\in \E_0$.
	If\/ $f\in X\setminus(\ZZ(Z_1)\cup\ZZ(Z_4)\cup\ZZ(Z_5))$,
	then the corresponding principal curvature surface\/ $h$ satisfies
	that\/ $h$ is nonsingular on\/ $\E_0$.
\end{lemma}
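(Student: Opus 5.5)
The plan is to prove the three assertions separately. Each one will follow from the basic property of resultants: if two polynomials in $t$ have a common root, their resultant vanishes, provided the leading coefficients do not both vanish. Throughout, $f$ is in the normal form \eqref{eq:wunormal}, and on $\E_0$ we use the coordinate $t=\tan\theta$. Since $\cos\theta\ne0$ on $\E_0$, the map $\theta\mapsto t$ is a bijection of each component, and the relevant quantities evaluated at $r=0$ are, up to nowhere-vanishing factors (powers of $1+t^2$, $\sqrt{4t^2+f_{331}^2}$, $f_{331}$), polynomials in $t$ whose coefficients are polynomial in the entries of $X$.

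For the first assertion, note that $\hat e'(\theta)=h_\theta(0,\theta)$, and by Lemma \ref{eq:singh} we have $h_{1\theta}(0,\theta)=0$. So $\hat e$ is singular at $\theta$ exactly when $h_{2\theta}=h_{3\theta}=0$ there. In the proof of Lemma \ref{eq:singh} it was shown that this system has a common root only in case $(S1$-$1)$, i.e. $Z_1=0$, or case $(S1$-$2)$. Case $(S1$-$2)$ was excluded because $h_\theta\simeq(0,0,f_{331})\ne0$. Moreover, the resultant of $h_{2\theta}$ and $h_{3\theta}$ is $-4f_{331}^4Z_1Z_4^2$ up to a constant. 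Hence $Z_1\neq0$ forces $\hat e'\ne0$ for all $\theta\in\E_0$.

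For the second assertion, $\hat\rho'(0)=h_r(0,\theta)$. If this vanished for some $t$, then $t$ would be a common root of $h_{1r}$ and $h_{2r}$, and also of $h_{1r}$ and $h_{3r}$. Therefore both $Z_2$ and $Z_3$ vanish, i.e. $f\in\ZZ(Z_2)\cap\ZZ(Z_3)$. One point needs checking: the resultant criterion requires a nonvanishing leading coefficient. By \eqref{eq:s2}, $h_{1r}$ has leading coefficient $4f_{331}^2>0$, so this holds.

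For the third assertion, Lemma \ref{eq:singh} (1) says that a singular point on $\E_0$ satisfies $(S1)$ or $(S2)$. We treat the two cases in turn.
\begin{enumerate}
\item[(i)] Condition $(S1)$ forces $h_{2\theta}=h_{3\theta}=0$, which is excluded by $Z_1\ne0$ as in the first assertion.
\item[(ii)] Condition $(S2)$ means $h_{1r}=0$ and $\D=\D_1\D_2=0$, with all factors polynomial in $t$ up to nonzero multiples. So either $(h_{1r},\D_1)$ or $(h_{1r},\D_2)$ has a common root $t$. Again using that the leading coefficient of $h_{1r}$ is nonzero, the corresponding resultant $Z_4$ or $Z_5$ vanishes.
\end{enumerate}
Thus $f\notin\ZZ(Z_1)\cup\ZZ(Z_4)\cup\ZZ(Z_5)$ excludes singular points on $\E_0$.

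The main obstacle is bookkeeping rather than a conceptual difficulty. We must make sure that the normalizing factors discarded under $\simeq$ never vanish on $\E_0$. We must also handle the $\theta$ with $t=0$, and confirm that the values $t\to\pm\infty$ (i.e. $\cos\theta=0$) are outside $\E_0$ and so are not needed. I would verify these points by checking the explicit denominators in the proof of Lemma \ref{eq:singh}; they are powers of $f_{331}$, $1+t^2$ and $4t^2+f_{331}^2$, all positive.
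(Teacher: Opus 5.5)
Your proposal is correct and follows the paper's route. The paper states this lemma as immediate ``by the construction'' of $Z_1,\dots,Z_5$ as resultants, combined with Lemma~\ref{eq:singh}, and that is exactly the argument you spell out. One small remark: a common root forces the Sylvester resultant to vanish for any leading coefficients, so your check that $h_{1r}$ has leading coefficient $4f_{331}^2\neq0$ is harmless but not needed.
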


\subsection{Geodesic and normal curvatures}
Let $h=h_{\kappa_1}:(\M_0,\E_0)\to\R^3$ be the principal curvature
surface of a Whitney umbrella $f$ with 
respect to the bounded principal curvature $\kappa_1$.
We consider the geodesic curvature $\kappa_g$
and the normal curvature $\kappa_n$
of the curve $h|_{\E_0}$ on the surface $h$,
defined on
$\E_0\setminus S(h|_{\E_0})$.
In this section, we show the following theorem.

\begin{theorem}\label{thm:numkgkn}
	Let\/ $h=h_{\kappa_1}:(\M_0,\E_0)\to\R^3$ be the principal curvature
	surface of a Whitney umbrella\/ $f$ with 
	respect to the bounded principal curvature\/ $\kappa_1$.
	Let\/ $X$ be the space of Whitney umbrellas up to four-jets
	identified with a subset of a Euclidean space.
	
	{\rm (1)} There exists an algebraic subset\/ $A_g\subsetneq X$ such that
	for any\/ $f\in X\setminus A_g$, the curve\/
	$h|_{\E_0}$ has no singular point, and
	the number of zeros of the geodesic curvature\/ $\kappa_g$
	of\/ $h|_{\E_0}$ as a curve on\/ $h$
	is\/ $2n - 1$ for\/ $n = 1, \ldots, 5$.
	
	{\rm (2)} There exists an algebraic subset\/ $A_n\subsetneq X$ such that
	for any\/ $f\in X\setminus A_n$, the curve\/
	$h|_{\E_0}$ has no singular point, and
	the number of zeros of the geodesic curvature\/ $\kappa_g$
	of\/ $h|_{\E_0}$ as a curve on\/ $h$
	is\/ $2n-1$ for\/ $n = 1, \ldots, 4$.
\end{theorem}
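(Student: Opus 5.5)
Working in $t=\tan\theta$ on $\E_0$, I will reduce both curvatures to polynomials in $t$ and then count real roots.

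\textbf{Reduction to polynomials.} I write $f$ in the normal form \eqref{eq:wunormal} and parametrize $h|_{\E_0}$ by $\hat e(\theta)=h(0,\theta)$. Because $h_{1\theta}(0,\theta)=0$, the curve $\hat e$ lies in $l^\perp$, and by Theorem~\ref{thm:pricipalpedal} it equals the pedal curve \eqref{eq:pricur}. In $t$ it is an explicit rational curve whose coefficients involve only $f_{21},f_{31},f_{331}$ at $0$. Away from $S(h)$ the unit normal of $h$ along $\E_0$ is $\nu_h\simeq h_r\times h_\theta|_{r=0}$; its computation brings in the first jets $f_{21}',f_{31}',f_{331}',f_{321},f_{322}$. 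This is why the parameter space is $X$.

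We have
\[
\kappa_g\simeq\det(\hat e',\hat e'',\nu_h),\qquad \kappa_n\simeq \hat e''\cdot\nu_h .
\]
Clearing the nonvanishing factors $(4t^2+f_{331}^2)$, $\sqrt{1+t^2}$ and $|h_r\times h_\theta|$, both become polynomials $P_g(t)$ and $P_n(t)$. Their coefficients are polynomial in the coordinates of $X$, and their degrees are bounded, say $d_g$ and $d_n$.

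\textbf{The excluded algebraic set.} I would take $A_g$ to be the union of the following sets:
\begin{itemize}
\item $\ZZ(Z_1)$, which guarantees that $h|_{\E_0}$ is regular by Lemma~\ref{lem:singe};
\item $\ZZ(Z_4)\cup\ZZ(Z_5)$, which guarantees that $\nu_h$ is defined along all of $\E_0$;
\item the zero set of the leading coefficient of $P_g$;
\item the zero set of the discriminant of $P_g$;
\item the zero sets of the resultants of $P_g$ with the cleared denominators.
\end{itemize}
The set $A_n$ is built the same way from $P_n$. Each of these sets is proper: the example of Section~\ref{sec:ex} has nonzero discriminants and no singularities on $\E_0$, so it lies outside all of them. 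Outside $A_g$, every real root of $P_g$ is simple and is a genuine zero of $\kappa_g$, and the number of real roots is locally constant.

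\textbf{Counting real roots.} The point $\theta=\pi/2$, that is $t=\infty$, lies outside $\E_0$, so we are counting real roots of $P_g$ on $\R$. I expect the parity claim to come from structure. If $\deg P_g$ is odd, generically the number of real roots is odd. One therefore has to show that the top coefficient of $P_g$ is generically nonzero and that its degree is $9$ (and $7$ for $P_n$). In the example, $\kappa_g$ has degree $9$ and $\kappa_n$ has degree $7$, which is consistent with this.

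The upper bounds $9$ and $7$ then follow from the degrees. For realizability I would exhibit explicit jets in $X$ whose polynomials have exactly $1,3,5,7,9$ real simple roots for $\kappa_g$, and $1,3,5,7$ for $\kappa_n$. Perturbing one of these explicit jets keeps its number of real simple roots, so each count holds on an open set meeting $X\setminus A_g$.

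\textbf{Main obstacle.} The hardest step is computing $P_g$ and $P_n$ symbolically and confirming their exact generic degree. The second difficult step is finding the witnesses with the maximal counts, $9$ and $7$, by a parameter search, for example prescribing the roots and solving for coefficients. I would first try to exploit the factorization seen in the example: both curvatures share the factor $-4-6t-3t^2+8t^3$, presumably coming from $\hat e''$ or from $\D$. Splitting off this common factor lets me control the remaining factors separately, and I would choose their coefficients so that each factor contributes its maximal number of real roots.
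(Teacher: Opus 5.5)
Up to the upper bound, your plan is the paper's proof. You write $f$ in normal form, express $\kappa_g\simeq\det(\hat e',\hat e'',\nu_h)$ and $\kappa_n\simeq\hat e''\cdot\nu_h$ as polynomials in $t$, and remove an algebraic set (regularity of $h|_{\E_0}$, leading coefficient, discriminant). You then conclude that the number of zeros is odd and at most the degree. The paper carries out the symbolic computation you defer: $\kappa_g\simeq C\,\D_1\D_2$ has degree $9$ with leading coefficient $\simeq f_{31}f_{322}f_{331}$, and $\kappa_n\simeq h_{1r}\,C$ has degree $7$ with leading coefficient $\simeq f_{31}f_{331}^2$. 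Here $C=8f_{31}t^3-3f_{331}(4f_{21}+f_{331}^2)t^2-6f_{31}f_{331}^2t-f_{331}(4f_{21}^2+4f_{31}^2+f_{21}f_{331}^2)$, and $\D_1,\D_2,h_{1r}$ are as in the proof of Lemma \ref{eq:singh}. Parity plus the degree bound is all the paper's proof establishes. Your extra exclusion of $\ZZ(Z_4)\cup\ZZ(Z_5)$, which keeps $\nu_h$ defined along $\E_0$, is a sensible refinement.

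The step that fails is realizability. There are no generic jets with $1$ or $9$ zeros of $\kappa_g$, nor with exactly $1$ zero of $\kappa_n$, so you cannot choose coefficients so that every factor attains its maximal number of real roots.

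\emph{Structure.} Since $\hat e$ lies in $l^\perp$ and $\nu_h\simeq h_r\times h_\theta=(\D,-h_{1r}h_{3\theta},h_{1r}h_{2\theta})$ on $r=0$, you get $\kappa_g\simeq\D\cdot C$ and $\kappa_n\simeq h_{1r}\cdot C$. Here $C\simeq\det(\hat e',\hat e'')$, computed in $l^\perp$, is the curvature of the planar pedal curve; this is the shared cubic you noticed in the example.

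\emph{Coupling of the factors for $\kappa_g$.} Normalize $\D_1=f_{331}t^2+2f_{31}t-f_{21}f_{331}$. It vanishes exactly where $\hat e$ passes through its double point $0$. One checks $\operatorname{disc}_t\D_1=4Z_1$ and $\operatorname{disc}_tC=-108f_{331}^2Z_1Z_4^2$. Geometrically, a real nodal cubic has one real flex at a crunode and three at an acnode. Hence for generic $f$ the product $C\D_1$ has exactly three real roots. Consequently $\kappa_g$ has $3+\#\{\text{real roots of }\D_2\}\in\{3,5,7\}$ zeros.

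\emph{Lower bound for $\kappa_n$.} One has $h_{1r}=4\D_1^2-(4t^2+f_{331}^2)$. If $Z_1>0$, then $h_{1r}$ is negative at the two real roots of $\D_1$ and tends to $+\infty$ at $\pm\infty$. So it has at least two real roots, while $C$ has one. If $Z_1<0$, $C$ already has three real roots. Thus $\kappa_n$ always has at least three zeros.

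You should drop the realizability claim and prove only what the degree argument gives: an odd number of zeros, at most $9$ for $\kappa_g$ and at most $7$ for $\kappa_n$. Alternatively, state the correct, smaller range.
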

\begin{proof}
	Assume that $f$ is written in the normal form
	\eqref{eq:wunormal}.
	Then by using \eqref{eq:k1}, we obtain a concrete form of
	$h=f+\kappa_1\nu$.
	We calculate the geodesic and normal curvatures of 
	the curve $\hat{e}:\theta\mapsto h(0,\theta)$.
	Since 
	$\hat{e}'$ is proportional to
	$$\big(
	0,\;
	4t^2 f_{31}-t(4f_{21}f_{331}+f_{331}^3)-f_{31}f_{331}^2,\;
	4t^4+t^2(4f_{21}+3f_{331}^2)+4t f_{31}f_{331}-f_{21}f_{331}^2
	\big)
	$$
	and $\hat{e}''$ is proportional to
	\begin{align*}
		&\Bigg(
		0,\;
		-4 f_{21} f_{331}^3 - f_{331}^5
		+ (24 f_{31} f_{331}^2 - 2 f_{31} f_{331}^4) t\\
		&\hspace{10mm}
		+ (48 f_{21} f_{331} + 12 f_{331}^3 - 12 f_{21} f_{331}^3 - 3 f_{331}^5) t^2\\
		&\hspace{10mm}
		+ (-32 f_{31} + 24 f_{31} f_{331}^2) t^3
		+ (16 f_{21} f_{331} + 4 f_{331}^3) t^4
		,\\
		&\hspace{10mm}
		2\Big(2 f_{31} f_{331}^3
		+ \big(3 f_{331}^4 - f_{21} f_{331}^2 (-12 + f_{331}^2)\big) t
		+ 6 f_{31} f_{331} (-4 + f_{331}^2) t^2\\
		&\hspace{10mm}
		+ 2 \big(f_{331}^2 (-2 + 3 f_{331}^2) + f_{21} (-8 + 6 f_{331}^2)\big) t^3
		- 8 f_{31} f_{331} t^4
		+ 12 f_{331}^2 t^5
		+ 16 t^7\Big)
		\Bigg).
	\end{align*}
	On the other hand, $\nu|_{r=0}$ is proportional to
	\begin{align*}
		&\Bigg(
		-6(- f_{21} f_{331} + 2 f_{31} t + f_{331} t^2)(4t^2+f_{331}^2)\\
		&\hspace{10mm}
		(- f_{331} f_{21}'
		+ 2 f_{31}' t
		+ 2 f_{331}' t^2
		+ 2 f_{321} t^3
		+ 2 f_{322} t^4),\\
		&
		\bigl(- f_{331}^2 + 4 f_{21}^2 f_{331}^2
		- 16 f_{21} f_{31} f_{331} t
		+ (-4 + 16 f_{31}^2 - 8 f_{21} f_{331}^2) t^2
		+ 16 f_{31} f_{331} t^3
		+ 4 f_{331}^2 t^4\bigr)\\
		&\hspace{10mm}
		\big(- f_{21} f_{331}^2
		+ 4 f_{31} f_{331} t
		+ (4 f_{21} + 3 f_{331}^2) t^2
		+ 4 t^4\big)
		,\\
		&\bigl(- f_{331}^2 + 4 f_{21}^2 f_{331}^2
		- 16 f_{21} f_{31} f_{331} t
		+ (-4 + 16 f_{31}^2 - 8 f_{21} f_{331}^2) t^2
		+ 16 f_{31} f_{331} t^3
		+ 4 f_{331}^2 t^4\bigr)\\
		&\hspace{10mm}
		\big(- f_{31} f_{331}^2 + (-4 f_{21} f_{331} - f_{331}^3) t + 4 f_{31} t^2\big)
		\Bigg).
	\end{align*}
	
	A direct computation shows that the geodesic curvature
	$\kappa_g$, which is proportional to
	$\det(\hat{e}',\hat{e}'',$ $\nu|_{r=0})$,
	is proportional to
	\begin{align*}
		&\Big(- f_{331}(4 f_{21}^2 + 4 f_{31}^2 + f_{21} f_{331}^2)
		- 6(f_{31} f_{331}^2)t
		- 3(4 f_{21} f_{331} + f_{331}^3)t^2
		+ 8 f_{31} t^3\Big)\\
		&\hspace{10mm}
		(- f_{21} f_{331} + 2 f_{31} t + f_{331} t^2)
		(- f_{331} f_{21}' + 2 f_{31}' t + 2 f_{331}' t^2 + 2 f_{321} t^3 + 2 f_{322} t^4),
	\end{align*}
	which is a polynomial in $t=\tan\theta$
	of degree $9$.
	Similarly, the normal curvature $\kappa_n$,
	which is proportional to
	$\hat{e}''\cdot\nu|_{r=0}$,
	is proportional to
	\begin{align*}
		&\Big((-1 + 4 f_{21}^2) f_{331}^2
		- 16f_{21} f_{31} f_{331} t
		+ 4(-1 + 4 f_{31}^2 - 2 f_{21} f_{331}^2) t^2
		+ 16 f_{31} f_{331} t^3
		+ 4 f_{331}^2 t^4\Big)\\
		&\hspace{10mm}
		(- f_{331}(4 f_{21}^2 + 4 f_{31}^2 + f_{21} f_{331}^2)
		- 6f_{31} f_{331}^2t
		- 3(4 f_{21} f_{331} + f_{331}^3)t^2
		+ 8 f_{31} t^3)
	\end{align*}
	which is a polynomial in $t=\tan\theta$
	of degree $7$.
	On the other hand,
	the leading coefficient of $\kappa_g$
	is proportional to
	$f_{31}f_{322}f_{331}$,
	and that of $\kappa_n$
	is proportional to
	$f_{31}f_{331}^2$.
	
	We denote by $Z_g$ and $Z_n$
	the discriminants of
	$\kappa_g$ and $\kappa_n$,
	respectively, regarded as polynomials in $t$.
	We may regard $Z_g$ and $Z_n$
	as polynomial maps on $X$.
	By the example in Section \ref{sec:ex},
	neither of them is the zero polynomial.
	
	We set
	$$
	A_g=\ZZ(Z_1)\cup \ZZ(Z_g)\cup \ZZ(f_{31}f_{322}),
	\qquad
	A_n=\ZZ(Z_1)\cup \ZZ(Z_n).
	$$
	Then, by Lemma \ref{lem:singe},
	every $f\in X\setminus A_g$
	satisfies the assertion of {\rm (1)},
	and every $f\in X\setminus A_n$
	satisfies the assertion of {\rm (2)}.
	This proves the theorem.
\end{proof}

\subsection{Geodesic and normal curvatures of radial curves at $\E_0$}

Let $h=h_{\kappa_1}:(\M_0,\E_0)\to\R^3$ be the principal curvature
surface of a Whitney umbrella $f$ with
respect to the bounded principal curvature $\kappa_1$.
We consider the geodesic and normal curvatures
of a radial curve regarded as a curve on $h$,
evaluated at $\E_0$.
These curvatures, evaluated at $\E_0$, are functions of
$\theta\in\E_0$,
which we denote by $r_g(\theta)$ and $r_n(\theta)$,
respectively.

In this section, we prove the following theorem.

\begin{theorem}\label{thm:numkgknrad}
	In the same setting as Theorem {\rm \ref{thm:numkgkn}},
	the following hold.
	
	{\rm (1)} There exists an algebraic subset\/ $A_{rg}\subsetneq X$ such that
	for any\/ $f\in X\setminus A_{rg}$,
	the radial curve is nonsingular at\/  $\E_0$ for every\/ $\theta$,
	and the number of zeros of\/ $r_g(\theta)$ for\/ $\theta\in\E_0$
	is\/ $2n - 1$ for\/ $n = 1, \ldots, 10$.
	
	{\rm (2)} There exists an algebraic subset\/ $A_{rn}\subsetneq X$ such that
	for any\/ $f\in X\setminus A_{rn}$,
	the radial curve is nonsingular at\/ $\E_0$ for every\/ $\theta$,
	and the number of zeros of\/ $r_n(\theta)$ for\/ $\theta\in\E_0$
	is\/ $2n$ for\/ $n = 0,1, \ldots, 7$.
\end{theorem}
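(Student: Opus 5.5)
We follow the strategy of the proof of Theorem \ref{thm:numkgkn}, now for the radial curve $\hat\rho:r\mapsto h(r,\theta)$ in place of $\hat e$. We take $f$ in the normal form \eqref{eq:wunormal} and use \eqref{eq:k1} to write $h=f+\kappa_1\nu$ explicitly on the blow-up. We then compute, on $\{r=0\}$, the vectors $\hat\rho'=h_r$, $\hat\rho''=h_{rr}$ and the unit normal $\nu_h$ of $h$. We take $\nu_h\simeq h_r\times h_\theta$, which is legitimate off $S(h)$.

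Up to nonvanishing factors (powers of $1+t^2$, $\delta$, $|h_r|$), the curvatures are
\[
r_g\simeq\det(h_r,h_{rr},\nu_h)\big|_{r=0},\qquad r_n\simeq h_{rr}\cdot\nu_h\big|_{r=0}.
\]
We clear the denominators coming from $\delta=\sqrt{f_{331}^2\cos^2\theta+4\sin^2\theta}$ and $\cos\theta$. After setting $t=\tan\theta$, each becomes a polynomial in $t$ whose coefficients are polynomials in the four-jet data $f_{21},f_{31},f_{331},f_{21}',f_{31}',f_{331}',f_{321},f_{322}$ and their higher derivatives. In particular they are polynomial maps on $X$. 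We then strip off common factors that never vanish; for instance the factor $4t^2+f_{331}^2>0$ arises, as in the proof of Theorem \ref{thm:numkgkn}.

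Next we read off degrees and leading coefficients. The example in Section \ref{sec:ex} shows that $r_g$ has the form $t\cdot(\text{degree }18)$, so its degree is $19$. It also shows that $r_n$ has degree $14$. We expect this to hold generally: $r_g$ has odd degree $19$, and $r_n$ has even degree $14$ with nonzero constant term. We will identify the leading coefficients $c_g,c_n$ as explicit monomial-type expressions in $f_{31},f_{322},f_{331}$.

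The counting works as follows. A real polynomial of odd degree $2m-1$ with simple roots has an odd number of real roots, at most $2m-1$; for even degree $2m$ the number is even, at most $2m$. This gives the parities $2n-1$ with $n\le10$ for $r_g$ and $2n$ with $n\le 7$ for $r_n$. We treat $\cos\theta=0$, i.e.\ $t=\infty$, as excluded from $\E_0$. So the zeros of $r_g,r_n$ on $\E_0$ correspond bijectively to the real roots in $t$, provided the leading coefficient is nonzero.

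We set
\[
A_{rg}=\ZZ(Z_2)\cap\ZZ(Z_3)\,\cup\,\ZZ(Z_1)\cup\ZZ(Z_4)\cup\ZZ(Z_5)\,\cup\,\ZZ(Z_{rg})\cup\ZZ(c_g),
\]
where $Z_{rg}$ is the discriminant of $r_g$ in $t$, and define $A_{rn}$ analogously. By Lemma \ref{lem:singe}, the radial curve is nonsingular and $h$ is regular along $\E_0$ off these sets, so $\nu_h$ is defined. The example in Section \ref{sec:ex} shows that $Z_{rg},Z_{rn}$ and the leading coefficients are not identically zero, so the algebraic sets are proper.

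The main obstacle is showing that every stated count actually occurs, i.e.\ that each value of $n$ is realized on an open set of $X$; the parity and upper bound alone do not give this. We would first look for explicit jets, perturbing the example of Section \ref{sec:ex} or choosing coefficients so that the polynomial nearly factors into prescribed real linear factors. The four-jet data supplies many free parameters, since $f_{322}$, $f_{321}$ and the derivatives enter the high-degree coefficients. For each $n$ we would exhibit a jet whose polynomial has exactly the desired number of simple real roots, checking this by Sturm sequences. Simplicity of the roots makes the count locally constant, which gives the open sets. The hardest cases are the extremal ones ($19$ and $14$ real roots). There we would try degenerate jets in which the polynomial factors into known factors with all real roots, and then perturb.
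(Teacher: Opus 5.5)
Your core argument is the paper's proof. The paper computes $r_g\simeq\det(h_r,h_{rr},\nu)|_{r=0}$ and $r_n\simeq h_{rr}\cdot\nu|_{r=0}$, with $\nu$ the normal of $h$ (effectively $h_r\times h_\theta$), as polynomials in $t$ of degrees $19$ and $14$. It then removes $\bigl(\ZZ(Z_2)\cap\ZZ(Z_3)\bigr)\cup\ZZ(Z_{rg})$ (resp.\ $\ZZ(Z_{rn})$), uses the example of Section~4 for properness, and concludes by parity.

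Three points of comparison.

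First, the degrees have to come from the general symbolic computation, since the example only bounds the generic degree from below. The paper finds the leading coefficients to be $f_{331}^4(9f_{322}^2+2f_{331}^2)$ and $f_{331}^5$. Both are positive on $X$, so your sets $\ZZ(c_g)$ and $\ZZ(c_n)$ are empty. Your guess that they are monomials in $f_{31},f_{322},f_{331}$ is wrong, but harmlessly so.

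Second, your extra removal of $\ZZ(Z_1)\cup\ZZ(Z_4)\cup\ZZ(Z_5)$ genuinely improves on the paper. The paper's $A_{rg},A_{rn}$ do not exclude points of type $(S1)$ or $(S2)$ on $\E_0$. At such points the radial curve can still be regular while $h_r\times h_\theta=0$. There $r_g,r_n$ are undefined, and the unnormalized polynomials pick up spurious roots.

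Third, the paper's proof stops at the parity-and-upper-bound argument and never shows that each value $2n-1$ (resp.\ $2n$) is attained. Your final paragraph therefore goes beyond what the paper establishes, and as written it is only a plan. It is also not automatic: for $r_g$, the $14$-dimensional family $X$ cannot reach all degree-$19$ coefficient vectors, so $19$ real roots need not occur.
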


\begin{proof}
	Assume that $f$ is written in the normal form
	\eqref{eq:wunormal}.
	Then, using \eqref{eq:k1}, we obtain an explicit expression for
	$h=f+\kappa_1\nu$.
	
	For a fixed $\theta$, we compute the geodesic and normal curvatures of
	the curve $\rho:r\mapsto h(r,\theta)$ at $r=0$.
	We obtain that
	$r_g$, which is proportional to
	$\det(h_r,h_{rr},\nu)|_{r=0}$,
	is proportional to a polynomial in $t=\tan\theta$
	of degree $19$, and that
	$r_n$, which is proportional to
	$h_{rr}\cdot\nu|_{r=0}$,
	is proportional to a polynomial in $t$
	of degree $14$.
	
	On the other hand,
	the leading coefficient of $r_g$ is proportional to
	$f_{331}^4(9 f_{322}^2+2 f_{331}^2)$,
	and that of $r_n$ is proportional to
	$f_{331}^5$.
	
	We denote by $Z_{rg}$ and $Z_{rn}$
	the discriminants of
	$r_g$ and $r_n$,
	respectively, regarded as polynomials in $t$.
	We may regard $Z_{rg}$ and $Z_{rn}$
	as polynomial maps on $X$.
	By the example in Section \ref{sec:ex},
	neither of them is the zero polynomial.
	
	We set
	$$
	A_{rg}=\left(\bigcap_{i=2,3}\ZZ(Z_i)\right)\cup \ZZ(Z_{rg}),
	\qquad
	A_{rn}=\left(\bigcap_{i=2,3}\ZZ(Z_i)\right)\cup \ZZ(Z_{rn}).
	$$
	Then, by Lemma \ref{lem:singe},
	every $f\in X\setminus A_{rg}$
	satisfies the assertion of {\rm (1)}, and
	every $f\in X\setminus A_{rn}$
	satisfies the assertion of {\rm (2)}.
	This proves the theorem.
\end{proof}

\subsection{Gaussian and mean curvatures}

In this section, we prove the following theorem.
Let
$K=(LN-M^2)/(EG-F^2)$ and $H=(EN-2FM+GL)/(2(EG-F^2))$
be the Gaussian and mean curvatures of $h$ defined on
$\M_0\setminus S(h)$, where
$E,F,G,L,M,N$ are the coefficients of the first and second fundamental forms;
see Section \ref{sec:priwu}.

\begin{theorem}\label{thm:numkh}
	In the same setting as Theorem\/ {\rm \ref{thm:numkgkn}},
	the following hold.
	
	{\rm (1)} There exists an algebraic subset\/ $A_k\subsetneq X$ such that,
	for any\/ $f\in X\setminus A_k$, the principal curvature surface\/
	$h$ has no singular point on\/ $\E_0$, and
	the number of zeros of the Gaussian curvature\/ $K$ on\/ $\E_0$ is\/
	$2n-1$ for\/ $n=1,\ldots,11$.
	
	{\rm (2)} There exists an algebraic subset\/ $A_h\subsetneq X$ such that,
	for any\/ $f\in X\setminus A_h$, the principal curvature surface\/
	$h$ has no singular point on\/ $\E_0$, and
	the number of zeros of the mean curvature\/ $H$ on\/ $\E_0$
	is\/ $2n$ for\/ $n=0,1,\ldots,9$.
\end{theorem}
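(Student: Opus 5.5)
The proof will follow the same scheme as Theorems \ref{thm:numkgkn} and \ref{thm:numkgknrad}. Assume $f$ is in the normal form \eqref{eq:wunormal}, and write $h=f+\kappa_1\nu$ explicitly using \eqref{eq:k1}.

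\textbf{Step 1: reduce $K$ and $H$ on $\E_0$ to polynomials in $t$.} Compute $h_r,h_\theta,h_{rr},h_{r\theta},h_{\theta\theta}$ on $\{r=0\}$. The term $h_{rr}$ needs the second $r$-derivative of $\kappa_1$, which is why the four-jet data $f_{321},f_{322},f_{331}'$, etc.\ enter. On $\E_0\setminus S(h)$ take the unit normal $\nu_h\simeq h_r\times h_\theta$. Then
\[
K\simeq \det(h_r,h_\theta,h_{rr})\det(h_r,h_\theta,h_{\theta\theta})-\det(h_r,h_\theta,h_{r\theta})^2 ,
\]
and $H$ is proportional to the analogous combination $EN-2FM+GL$ built from the same determinants and the first fundamental form of $h$. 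Since every quantity is rational in $\cos\theta,\sin\theta$ together with $\delta=\sqrt{\cos^2\theta f_{331}^2+4\sin^2\theta}$, I will divide by suitable powers of $\cos\theta$. I will also check that $\delta$ and the square roots appearing in $h_{1r}$ cancel or occur only in nowhere-vanishing factors; this is the same mechanism that made $\kappa_g$ and $\kappa_n$ polynomial. After discarding nowhere-vanishing factors, $K|_{r=0}$ and $H|_{r=0}$ become polynomials in $t=\tan\theta$ whose coefficients are polynomials on $X$. The example of Section~\ref{sec:ex} indicates degrees $21$ and $18$.

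\textbf{Step 2: leading coefficients and the genericity sets.} Compute the leading coefficients $c_K$ and $c_H$. I expect them to be a power of $f_{331}$ times a factor like $(9f_{322}^2+2f_{331}^2)$, as for $r_g$. Let $Z_K,Z_H$ be the discriminants in $t$. The example of Section~\ref{sec:ex} gives nonzero discriminants, so $Z_K,Z_H$ are not identically zero on $X$. Define
\[
A_k=\ZZ(Z_1)\cup\ZZ(Z_4)\cup\ZZ(Z_5)\cup\ZZ(Z_K)\cup\ZZ(c_K),
\]
and define $A_h$ analogously. By Lemma \ref{lem:singe}, $h$ is nonsingular on $\E_0$ outside these sets. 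Outside them every root is simple and the degree is exactly $21$ (respectively $18$). Each real root $t$ corresponds to exactly one point of $\E_0$, since $\theta$ and $\theta+\pi$ are identified. An odd-degree real polynomial has an odd number of simple real roots, hence $2n-1$ with $n\le 11$. An even-degree one has an even number, hence $2n$ with $0\le n\le 9$. One must also exclude $t=\infty$, i.e.\ $\cos\theta=0$, but that lies outside $\E_0$.

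\textbf{Step 3: realizability, the main obstacle.} I must show that every count listed actually occurs on an open set, and in particular that the maxima $21$ and $18$ (all roots real) are attained. Here I will exhibit explicit jets in $X$, perturbing the example in Section~\ref{sec:ex} and tuning the free parameters $f_{322},f_{322,u},f_{322,v},f_{321},f_{321}',f''_{21},f''_{31},f''_{331},\dots$, which enter $K$ and $H$ at high order in $t$. For each target count I will find a jet with simple roots and the prescribed number of real ones. I expect the hardest part to be finding jets with all $21$ (respectively $18$) roots real. I would try first to choose parameters so that the polynomial approximately factors into products of the visible low-degree factors, in the way $\kappa_g$ factorized, each factor having real roots. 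Failing that, I would use a numerical search followed by exact Sturm-sequence verification. Since real root counts with simple roots are locally constant, each verified example gives an open set of such jets, which proves that every listed count is realized.
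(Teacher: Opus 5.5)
Your Steps~1--2 match the paper's proof: reduce $LN-M^2$ and $EN-2FM+GL$ on $\E_0$ to polynomials in $t=\tan\theta$ of degrees $21$ and $18$ (the paper finds leading coefficients $\simeq f_{31}f_{331}^7$ and $f_{331}^5$, so only $\ZZ(f_{31})$ must be added), remove the discriminant zero sets and the singularity conditions of Lemma~\ref{lem:singe}, and conclude by parity; your inclusion of $\ZZ(Z_1)$ is actually required by Lemma~\ref{lem:singe}, and the paper's $A_k$, $A_h$ omit it. Your Step~3 is not part of the paper's argument, which proves only the parity and the upper bound on the number of zeros, and it is not needed for the statement read that way; as written it is a search plan rather than a proof, since nothing in it guarantees that the extreme counts $21$ and $18$ can be reached.
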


\begin{proof}
	We see that
	$\tilde{K}=LN-M^2$ is proportional to a polynomial in
	$t=\tan\theta$ of degree $21$, and that
	$\tilde{H}=EN-2FM+GL$ is proportional to a polynomial in
	$t$ of degree $18$.
	We do not give their explicit expressions
	because of their length.
	
	The leading coefficients of $\tilde{K}$ and $\tilde{H}$
	are proportional to
	$f_{31}f_{331}^7$
	and
	$f_{331}^5$,
	respectively.
	
	We denote by $Z_k$ and $Z_h$
	the discriminants of
	$\tilde{K}$ and $\tilde{H}$,
	respectively, regarded as polynomials in $t$.
	We may regard $Z_k$ and $Z_h$
	as polynomial maps on $X$.
	By the example in Section \ref{sec:ex},
	neither of them is the zero polynomial.
	
	We remark that if
	$\E_0\cap S(h)=\emptyset$,
	then
	$EG-F^2$
	never vanishes on $\E_0$.
	We set
	$$
	A_k=\ZZ(Z_4)\cup \ZZ(Z_5)\cup \ZZ(Z_k)\cup \ZZ(f_{31}),
	\qquad
	A_h=\ZZ(Z_4)\cup \ZZ(Z_5)\cup \ZZ(Z_h),
	$$
	where $f_{31}$ is regarded as a polynomial on $X$.
	
	Then, by Lemma \ref{lem:singe},
	every $f\in X\setminus A_k$
	satisfies the assertion of {\rm (1)},
	and every $f\in X\setminus A_h$
	satisfies the assertion of {\rm (2)}.
	This proves the theorem.
\end{proof}

We note that, in the example given in Section \ref{sec:ex},
Theorems \ref{thm:numkgkn},
\ref{thm:numkgknrad},
and \ref{thm:numkh} all apply,
and the degrees of the corresponding functions agree with those predicted by the theorems.

\appendix
\section{Singularities of principal curvature surfaces on regular surfaces}

In this appendix, we study
principal curvature surfaces associated with regular surfaces.
To the best of our knowledge,
there has been no systematic study of these surfaces.
One possible reason is that, unlike the focal set (caustic),
which is compatible with homothetic scaling,
the principal curvature surface is not.

Indeed, let $f:(\R^2,0)\to(\R^3,0)$ be a regular surface and let
$\ell\in\R_{>0}$.
The principal curvatures of the scaled surface $\ell f$ are
$\kappa_i/\ell$, where $\kappa_i$ $(i=1,2)$ are the principal curvatures of $f$.
Thus the principal curvature surfaces of $\ell f$ are
$$
\ell f+\frac{\kappa_i}{\ell}\nu,
$$
whereas the caustics of $\ell f$ are
$$
\ell\left(f+\frac{\nu}{\kappa_i}\right),
$$
where $\nu$ is a unit normal vector.
As we shall see below, the singularities of principal curvature surfaces
depend on the scaling factor $\ell$.

Furthermore, the focal set is known to be the bifurcation set
of an unfolding of a function, and its singularities occur on fronts.
On the other hand, as we have seen in this paper,
principal curvature surfaces may admit Whitney umbrella singularities,
which do not occur on fronts.
Thus, the types of singularities that appear are completely different.

\subsection{Non-umbilic case}

Let $f:(\R^2,0)\to(\R^3,0)$ be a regular surface that
$0$ is not an umbilic point.
We assume that $f$ is written in Monge form
\begin{equation}\label{eq:monge}
	f=\left(u,v,\dfrac{\alpha_{20}}{2}u^2+
	\dfrac{\alpha_{02}}{2}v^2
	+\sum_{i+j\geq3}\dfrac{a_{ij}}{i!j!}u^iv^j
	\right).
\end{equation}
Since $0$ is not an umbilic point,
we may set
$\alpha_{20}=\alpha-k$, $\alpha_{02}=\alpha+k$ $(k>0)$.
Then one principal curvature $\kappa_1$ can be calculated as
\begin{align*}
	\kappa_1=&\alpha-k+\alpha_{30}u+\alpha_{21}v\\
	&\hspace{10mm}
	+\frac{1}{2k}
	\Big(\big(-\alpha_{21}^2+(-3\alpha^3+\alpha_{40})k+9\alpha^2k^2-9\alpha k^3+3k^4)\big)u^2\\
	&\hspace{10mm}
	+(-2\alpha_{12}\alpha_{21}+2\alpha_{31}k)uv\\
	&\hspace{10mm}
	+\big(-\alpha_{12}^2+k(-\alpha^3+\alpha_{22})-\alpha^2k^2+\alpha k^3+k^4)\big)v^2\Big)+O(3).
\end{align*}
Then we obtain a concrete expression of $h_{\kappa_1}$.
We set $h=h_{\kappa_1}$.  The first derivatives of $h$ are
\begin{equation}\label{eq:regdiff}
	h_u=(1 - (\alpha^{2}-k), 0, \alpha_{30}),\quad
	h_v=(0,1 - \alpha^2+k^2,\alpha_{21}).
\end{equation}
We see that $(1 - (\alpha^{2}-k),1 - \alpha^2+k^2)\ne(0,0)$.
Thus,
\begin{align*}
	&S(h)=\{k=\alpha\pm1,\ \alpha_{30}=0,\ 1 - \alpha^2+k^2\ne0\}\\
	&\hspace{20mm}\sqcup
	\left\{k=\sqrt{-1+\alpha^2},\ \alpha_{21}=0,\ 1 - (\alpha^{2}-k)\ne0\right\}.
\end{align*}
If  $k=\alpha\pm1$ and $\alpha_{30}=0$,
then $(\partial_v,\partial_u)$ is an adapted pair.
If  $k=\sqrt{-1+\alpha^2}$ and $\alpha_{21}=0$,
then $(\partial_u,\partial_v)$ is an adapted pair.
We have
\begin{align}
	\label{eq:regh2diff}
	h_{uu}&=\Big(
	3\alpha_{30}(-\alpha+k),\;
	\alpha_{21}(-\alpha+k),\\
	&\hspace{20mm}
	\alpha-4\alpha^3+\alpha_{40}-\dfrac{\alpha_{21}^2}{k}-k+12\alpha^2k-12\alpha k^2+4k^3\Big),
	\nonumber\\
	h_{uv}&=\Big(
	2\alpha_{21}(-\alpha+k),\;
	-\alpha(\alpha_{12}+\alpha_{30})+(\alpha_{12}-\alpha_{30})k,\;
	\alpha_{31}-\frac{\alpha_{12}\alpha_{21}}{k}\Big),\nonumber\\
	h_{vv}&=\Big(
	\alpha_{12}(-\alpha+k),\;
	-\alpha(\alpha_{03}+2\alpha_{21})+(\alpha_{03}-2\alpha_{21})k,\nonumber\\
	&\hspace{20mm}
	\alpha-2\alpha^3+\alpha_{22}-\dfrac{\alpha_{12}^2}{k}+k-2\alpha^2k+2\alpha k^2+2k^3
	\Big).\nonumber
\end{align}
Substituting these expressions into \eqref{eq:invs}, we get 
the invariants $W_{21}(h)$, $W_{31}(h)$ and $W_{331}(h)$ 
at an $S$-type singularity on $h$ at the origin.
However, due to their length, we omit the explicit expressions,
and present only $W_{31}(h)$ and $W_{331}(h)$
for the case $k=\alpha\pm1$ and $\alpha_{30}=0$.
We assume $k=\alpha\pm1$ and $\alpha_{30}=0$.
By \eqref{eq:regdiff} and \eqref{eq:regh2diff}, the germ of $h$ at the origin
is of $S$-type if and only if
$6-3\alpha_{21}^2+2\alpha_{40}+2\alpha(3+\alpha_{40})\ne0$.
Assuming this condition, we obtain
\begin{align*}
	W_{31}(h)&\simeq
	-8\alpha_{21}\alpha_{31}+\alpha_{12}(6+9\alpha_{21}^2+2\alpha_{40})
	+\big(-8\alpha_{21}\alpha_{31}+2\alpha_{12}(3+\alpha_{40})\big)\alpha,\\
	W_{331}(h)&\simeq
	\alpha_{21}.
\end{align*}

Furthermore, by interchanging $(u,v)$ and $(v,u)$,
replacing $k$ by $-k$,
replacing $\alpha_{ij}$ by $\alpha_{ji}$,
and applying a rotation by $\pi/2$ about the third axis,
we obtain $h_{\kappa_2}$ from $h_{\kappa_1}$.
The corresponding calculations for $h_{\kappa_2}$ are omitted.

\subsection{Umbilic case}

Let $f:(\R^2,0)\to(\R^3,0)$ be a regular surface such that
$0$ is an umbilic point.
We write $f$ as the Monge form \eqref{eq:monge} with
$\alpha_{02}=\alpha_{20}$.
We set 
$$
\sigma=-\alpha_{30}\alpha_{12}-\alpha_{21}\alpha_{03}+\alpha_{12}^{2}+\alpha_{21}^{2}.
$$
If $\sigma>0$, then the index of the configuration of the
principal directions of $f$
around $0$ is $1/2$, and
if $\sigma<0$, then the index is $-1/2$.
An umbilic point satisfying $\sigma\ne0$ is said to be
{\it non-degenerate}.
Around a non-degenerate umbilic point,
the configuration of the principal directions is of
lemon, or star,  or monstar  type in the Darboux classification.
See \cite{bf} and \cite[Section 12]{por} in details.
We consider the blown-up space $\M$ and the blow-up \eqref{eq:bl}.
We have the following theorem.
\begin{theorem}\label{thm:umbpri}
	If\/ $\sigma\ne0$, and\/ $\alpha_{20}\ne0$, then
	the principal curvatures are of class\/ $C^\infty$
	as a map\/ $(\M,\E)\to\R$.
\end{theorem}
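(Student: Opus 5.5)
The plan is to follow the strategy of Lemma~\ref{lem:k1cinf}: pull the quadratic \eqref{eq:preqap} back to $\M$ and show that the only non-smooth ingredient, $\sqrt{D}$, becomes smooth after the blow-up. I write $f$ in Monge form \eqref{eq:monge} with $\alpha_{20}=\alpha_{02}=\alpha$. Then $E=1+O(2)$, $F=O(2)$, $G=1+O(2)$ and $L,M,N=\alpha+O(1)$, so $A_2=EG-F^2=1+O(2)$, $A_1=-2\alpha+O(1)$ and $A_0=\alpha^2+O(1)$. The principal curvatures are
$$
k_\pm=\dfrac{-A_1\pm\sqrt{D}}{2A_2}=\dfrac{-2A_0}{A_1\pm\sqrt{D}},
$$
and $A_0,A_1,A_2$ are smooth on $\M$ because they are composed with $\pi$. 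So everything reduces to the smoothness of $\sqrt{D}$ on $(\M,\E)$.

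\emph{Step 1: $D$ vanishes to order exactly two.} Since $D$ is, up to the unit $A_2^2$, the squared difference of the eigenvalues of $\firstmat^{-1}\secondmat$, I would write $\firstmat^{-1}\secondmat=\alpha\,\mathrm{Id}+T(u,v)+O(2)$. Here $T$ is linear in $(u,v)$, and its trace-free part is $\pmt{a&b\\ b&-a}$ with
$$
a=\tfrac12\big((\alpha_{30}-\alpha_{12})u+(\alpha_{21}-\alpha_{03})v\big),\qquad b=\alpha_{21}u+\alpha_{12}v.
$$
This gives $D=4(a^2+b^2)+O(3)$. Substituting $u=r\cos\theta$, $v=r\sin\theta$ and using that $D$ is smooth gives
$$
D=r^2\tilde D(r,\theta),\qquad \tilde D(0,\theta)=4\big(a(\cos\theta,\sin\theta)^2+b(\cos\theta,\sin\theta)^2\big),
$$
with $\tilde D$ smooth by Hadamard's lemma.

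\emph{Step 2: positivity from $\sigma\ne0$.} The pair $(a,b)$ is a linear map $\R^2\to\R^2$, so $\tilde D(0,\theta)>0$ for every $\theta$ exactly when this map is injective. Its determinant is
$$
\tfrac12\big((\alpha_{30}-\alpha_{12})\alpha_{12}-(\alpha_{21}-\alpha_{03})\alpha_{21}\big)=-\tfrac12\sigma .
$$
Hence $\sigma\ne0$ gives $\tilde D>0$ near $\E$, and $\sqrt{D}=r\sqrt{\tilde D}$, where $r\sqrt{\tilde D}$ is smooth. I would check this determinant carefully with the exact second-order coefficients, because the normalisation of $T$ is the main bookkeeping risk.

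\emph{Step 3: assemble the smooth branches.} I take $k_\pm=(-A_1\pm r\sqrt{\tilde D})/(2A_2)$, or equivalently $-2A_0/(A_1\pm r\sqrt{\tilde D})$. The hypothesis $\alpha_{20}\ne0$ gives $A_1|_{\E}=-2\alpha\ne0$ and $A_0|_{\E}=\alpha^2\ne0$. So in the reciprocal form the denominator never vanishes along $\E$, and $k_\pm$ stay away from $0$, which is what one needs to form $h_{\kappa_i}$ and compare with the focal sets.

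I expect the main obstacle to be compatibility with the identification $(r,\theta)\sim(-r,\theta+\pi)$ defining $\M$. The factor $r$ changes sign under it, while $\tilde D$ is invariant because $D$ is a function on $\R^2$. Consequently $r\sqrt{\tilde D}$ is odd under the identification, and the two branches $k_+$ and $k_-$ are exchanged rather than each being preserved. I would handle this as follows. First, work on the double cover $\R\times S^1$ and show that each $k_\pm$ is smooth there. Then observe that the unordered pair of principal curvatures descends to a smooth map on $\M$. Alternatively, on each chart $\{\cos\theta\ne0\}$ I would fix the representative with $\cos\theta>0$, as was done for $\kappa_1$ in the proof of Lemma~\ref{lem:k1cinf}. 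This is consistent with the index $\pm1/2$ of the principal line field around a non-degenerate umbilic: the two curvatures are interchanged by one turn around $0$, so individual smoothness can only hold on charts or on the double cover. The statement should be read in that sense.
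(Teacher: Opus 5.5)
Your proposal is correct and is essentially the paper's proof. The paper also writes $\sqrt{D}=\frac{r}{2\sqrt2}\sqrt{B_1+B_2\cos2\theta+B_3\sin2\theta+O(r)}$ and gets positivity from $B_1^2-(B_2^2+B_3^2)=16\sigma^2$. This is the same fact as your determinant $-\sigma/2$, since on the unit circle your form $4(a^2+b^2)$ equals $\tfrac12(B_1+B_2\cos2\theta+B_3\sin2\theta)$.

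Your caveat about $(r,\theta)\sim(-r,\theta+\pi)$ is also right, and the paper passes over it by simply setting $\pm|r|=r$. The two smooth branches are exchanged by the deck transformation, so no single principal curvature is even $C^1$ on a full neighborhood of $\E$ in $\M$. Your reading, via charts or the double cover, is the correct one. Finally, as your formula with denominator $2A_2=2+O(2)$ shows, the hypothesis $\alpha_{20}\ne0$ is not actually needed for smoothness; it only matters for the reciprocal form.
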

\begin{proof}
	As we saw in the proof of Lemma \ref{lem:k1cinf},
	the principal curvatures are given in \eqref{eq:kpm}.
	Since $f$ is given as \eqref{eq:monge} with
	$\alpha_{02}=\alpha_{20}$, in the setting 
	of the theorem we are proving, we have
	\begin{align*}
		A_1=&2 \alpha_{20}+O(r),\\
		\pm\sqrt{D}=&\pm\frac{1}{2\sqrt{2}}\sqrt{r^{2}\Big(B_1 +B_2\cos2\theta+B_3 \sin2\theta\Big)+O(r^3)},\\
=&\frac{\pm|r|}{2\sqrt{2}}\sqrt{B_1 +B_2\cos2\theta+B_3 \sin2\theta+O(r)}\\
           =&
\frac{r}{2\sqrt{2}}\sqrt{B_1 +B_2\cos2\theta+B_3 \sin2\theta+O(r)}\\
           =&
\frac{r}{2\sqrt{2}}\sqrt{B_1 +(B_2^2+B_3^2)^{1/2}\cos(2\theta+\zeta)+O(r)}\\
		B_1=&
		\alpha_{03}^{2}+5 \alpha_{12}^{2}-2 \alpha_{03} \alpha_{21}
		+5 \alpha_{21}^{2}-2 \alpha_{12} \alpha_{30}+\alpha_{30}^{2},\\
		B_2=&
		-(\alpha_{03}^{2}+3 \alpha_{12}^{2}-2 \alpha_{03} \alpha_{21}-3 \alpha_{21}^{2}+2 \alpha_{12} \alpha_{30}-\alpha_{30}^{2}),\\
		B_3=&
		+2(\alpha_{03} \alpha_{12}+3 \alpha_{12} \alpha_{21}-\alpha_{03} \alpha_{30}+\alpha_{21} \alpha_{30}),
	\end{align*}
where $\zeta$ is a certain real number.
	Thus $D$ does not vanish when $r\to0$ if and only if
	$B_1^2>B_2^2+B_3^2$.
	By a direct calculation, we see
	$B_1^2-(B_2^2+B_3^2)=16\sigma^2$.
	Furthermore, since $\alpha_{20}\ne0$,
	the denominator in \eqref{eq:kpm} does not vanish near $\E$.
	Hence the principal curvatures are of class $C^\infty$
	as maps $(\M,\E)\to\R$.
\end{proof}

Under the assumptions of this theorem,
the two principal curvatures $\kappa_1$ and $\kappa_2$,
as well as their corresponding principal curvature surfaces
$h_{\kappa_i}$ $(i=1,2)$, are well-defined.
However, each $h_{\kappa_i}(\E)$ $(i=1,2)$ consists of a single point,
and the analysis of the singularities of these surfaces lies outside
the scope of this paper.


\ifrmirejected{
\medskip
{\footnotesize
	\begin{flushright}
		\begin{tabular}{l}
			(Luciana F. Martins)\\
			Departamento de Matem{\'a}tica,\\
			IBILCE - UNESP, \\
			R. Crist{\'o}v{\~a}o Colombo, 2265, \\
			S{\~a}o Jos{\'e} do Rio Preto, SP\\
			CEP 15054-000, Brazil\\
			\\
			(Kentaro Saji and Runa Shimada)\\
			Department of Mathematics,\\
			Graduate School of Science, \\
			Kobe University, \\
			Rokkodai 1-1, Nada, Kobe \\
			657-8501, Japan\\
			\\
			(Samuel P. dos Santos)\\
			Departamento de Matem{\'a}tica,\\
			Universidade Estadual de Maring{\'a}-UEM, \\
			Av. Colombro, 5790, Maring{\'a}, PR\\
			CEP 87020-900, Brazil\\
		\end{tabular}
	\end{flushright}
}
}

\begin{thebibliography}{000}
	\bibitem{bf}
	J. W. Bruce and D. L. Fidal,
	{\it On binary differential equations and umbilics},
	Proc. Roy. Soc. Edinburgh Sect. A {\bf 111} (1989), 147--168.
	
	\bibitem{bw}
	J. W. Bruce and J. M. West,
	{\it Functions on a crosscap},
	Math. Proc. Cambridge Philos. Soc. {\bf 123} (1998), no. 1, 19--39.
	
	
	\bibitem{fh-fronts}
	T. Fukui and M. Hasegawa,
	\textit{Fronts of Whitney umbrella - a differential
		geometric approach via blowing up}, 
	J. Singul. {\bf 4} (2012), 35--67.
	
	\bibitem{fh-normal}
	T. Fukui and M. Hasegawa,
	\textit{Distance squared functions on singular surfaces parameterized 
		by smooth maps\/ $\A$-equivalent
		to\/ $S_k$, $B_k$, $C_k$ and\/ $F_4$}, 
	In: Deepening and Evolution of Applied Singularity Theory,
	Adv. Stud. Pure Math. {\bf 89}, 113--151, Math. Soc. Japan, 2025.
	
	\bibitem{hhnuy}
	M. Hasegawa, A. Honda, K. Naokawa, M. Umehara and K. Yamada, 
	{\it Intrinsic
		invariants of cross caps},
	Sel. Math. New Ser. {\bf 20} (2014), 769--785.
	
	\bibitem{mn}
	L. F. Martins and J. J. Nu\~no-Ballesteros,
	\textit{Contact properties of surfaces in\/ 
		$\R^3$ with corank\/ $1$ singularities},
	Tohoku Math. J. \bf{67} \rm{(2015), 105--124.}
	
	\bibitem{mond} D. Mond,
	\textit{On the classification of germs of maps from\/ $\R^2$ to\/ $\R^3$},
	Proc. London Math. Soc. \bf{50} \rm{(1985), 333-369.}
	
	\bibitem{por}
	I. R. Porteous,
	{\it Geometric differentiation. 
		For the intelligence of curves and surfaces},
	Second edition. Cambridge University Press, Cambridge, 2001.
	
	\bibitem{shimadas1}
	R. Shimada,
	{\it Geometry on deformations of\/ $S_1$ singularities},
	Houston J. Math.,
	{\bf 50} (2024), 873--890.
	
	\bibitem{shimada2para}
	R. Shimada,
	{\it Geometric deformations of codimension two singularities},
	preprint.
	
	\bibitem{tari}
	F. Tari,
	{\it On pairs of geometric foliations on a cross-cap},
	Tohoku Math. J. (2) {\bf 59} (2007), no. 2, 233--258. 
	
	\bibitem{west}
	J. M. West,
	{\it The differential geometry of the cross-cap},
	PhD thesis, University of Liverpool (1995).
	
\end{thebibliography}
\end{document}

E-mail: {\tt saji@math.kobe-u.ac.jp}\\